\newtheorem{thm}{Theorem}[section]
\newtheorem{prop}[thm]{Proposition}
\newtheorem{lem}[thm]{Lemma}
\theoremstyle{defi}
\newtheorem{defi}[thm]{Definition}
\theoremstyle{rem}
\newtheorem{rem}[thm]{Remark}
\numberwithin{equation}{section}
\begin{document}

\title{Small perturbation solutions for nonlocal elliptic equations}

\author{Hui Yu}
\address{Department of Mathematics, The University of Texas at Austin}
\email{hyu@math.utexas.edu}

\begin{abstract}
We present a small perturbation result for nonlocal elliptic equations, which says that for a class of nonlocal operators, the solutions are in $C^{\sigma+\alpha}$ for any $\alpha\in(0,1)$ as long as the solutions are small. This is a nonlocal generalization of a celebrated result of Savin in the case of second order equations.
\end{abstract}

\maketitle

\tableofcontents

\section{Introduction}
In this work, we present a nonlocal generalization of a celebrated result by Savin concerning small perturbation solutions for elliptic equations \cite{S}, which, in its simplest form, states the following:
\begin{thm} Suppose $F$ is a `nice' uniformly elliptic operator. For any $\alpha\in(0,1)$ there is a constant $\kappa>0$ such that if $u$ solves in the viscosity sense
$$F(D^2u)=0 \text{  in $B_1$},$$ then $$\|u\|_{C^{2,\alpha}(B_{1/2})}\le C(\alpha)\|u\|_{\mathcal{L}^{\infty}(B_1)}$$ whenever $\|u\|_{\mathcal{L}^{\infty}(B_1)}\le\kappa$.
\end{thm}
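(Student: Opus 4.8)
The plan is to establish, with universal constants, a pointwise second-order Taylor expansion for $u$ at every point of $B_{1/2}$, and then to assemble these pointwise expansions into the stated global norm bound via the Campanato-type characterization of H\"older spaces used in Caffarelli's pointwise regularity theory. I use the structural content hidden in the word ``nice'': $F(0)=0$, uniform ellipticity with constants $\lambda\le\Lambda$, and differentiability of $F$ at $0$ (with a modulus of continuity for $DF$ near $0$). Then $DF(0)$, which we identify with the symmetric positive-definite matrix $A_0$ via $DF(0)[M]=\operatorname{tr}(A_0M)$, is a constant-coefficient elliptic operator, and by the implicit function theorem the zero set $\mathcal S=\{M\ \text{symmetric}:\ F(M)=0\}$ is, near $M=0$, a smooth hypersurface tangent there to the hyperplane $\{M:\ \operatorname{tr}(A_0M)=0\}$; the quadratic polynomials whose Hessian lies in $\mathcal S$ are precisely the quadratic solutions near constants, and they are the competitors in the iteration.

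The heart of the argument is an \emph{improvement of flatness} lemma, proved uniformly over the class of nice operators with fixed structural constants: there are universal $\rho\in(0,\tfrac14)$, $\bar\varepsilon>0$ and $C_1>0$ such that if $u$ solves $F(D^2u)=0$ in $B_1$ and $\|u\|_{L^\infty(B_1)}\le\varepsilon\le\bar\varepsilon$, then there is a quadratic polynomial $\widetilde P$ with $D^2\widetilde P\in\mathcal S$, with $|\widetilde P(0)|+|D\widetilde P(0)|+|D^2\widetilde P|\le C_1\varepsilon$, and with $\|u-\widetilde P\|_{L^\infty(B_\rho)}\le\rho^{2+\alpha}\varepsilon$. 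I would prove this by compactness and contradiction. If it fails there are nice operators $F_j$ with common structural constants, solutions $u_j$ of $F_j(D^2u_j)=0$, and $\varepsilon_j\to0$ for which no such quadratic exists; set $v_j:=u_j/\varepsilon_j$, so that $\|v_j\|_{L^\infty(B_1)}\le1$ and $v_j$ solves the uniformly elliptic equation $G_j(D^2v_j)=0$ with $G_j(M):=\varepsilon_j^{-1}\big(F_j(\varepsilon_j M)-F_j(0)\big)$. Writing $F_j(M)=\operatorname{tr}(A_jM)+R_j(M)$ with $A_j=DF_j(0)$ and $|R_j(M)|\le\omega(|M|)\,|M|$ for a common modulus $\omega$, we get $G_j(M)=\operatorname{tr}(A_jM)+O\big(\omega(\varepsilon_j|M|)\,|M|\big)$; along a subsequence $A_j\to A_\infty$ (eigenvalues in $[\lambda,\Lambda]$) and $G_j\to\operatorname{tr}(A_\infty\,\cdot\,)$ locally uniformly. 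The interior H\"older estimate of Krylov--Safonov--Caffarelli bounds $\|v_j\|_{C^{\gamma}(B_{1/2})}$ uniformly, and stability of viscosity solutions under local uniform convergence of operators yields a limit $v$ with $\operatorname{tr}(A_\infty D^2v)=0$ in $B_{1/2}$ and $\|v\|_{L^\infty}\le1$; interior estimates for this constant-coefficient operator make $v$ smooth, so its second-order Taylor polynomial $Q$ at $0$ satisfies $\operatorname{tr}(A_\infty D^2Q)=0$ and $\|v-Q\|_{L^\infty(B_\rho)}\le C\rho^{3}\le\tfrac12\rho^{2+\alpha}$ once $\rho$ is fixed small. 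Since $\varepsilon_jD^2Q$ lies in the tangent hyperplane to $\mathcal S_j:=\{F_j=0\}$ at $0$, it is within distance $O(\varepsilon_j^{2})$ of $\mathcal S_j$; projecting onto $\mathcal S_j$ and adjusting lower-order coefficients produces $\widetilde P_j\approx\varepsilon_jQ$ with $D^2\widetilde P_j\in\mathcal S_j$ and $\|u_j-\widetilde P_j\|_{L^\infty(B_\rho)}\le\varepsilon_j(\tfrac34\rho^{2+\alpha}+C\varepsilon_j)\le\varepsilon_j\rho^{2+\alpha}$ for large $j$ --- the desired contradiction. The step I expect to be the main obstacle is extracting the limit $v$ at all, i.e., the compactness of the viscosity solutions $v_j$, since it rests on the full Krylov--Safonov--Caffarelli machinery; the replacement of the nonlinear set $\mathcal S_j$ by its tangent hyperplane up to a quadratic error, which is where differentiability of $F$ enters, is the second delicate point.

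With the lemma in hand, the theorem follows by iteration. Put $P_0\equiv0$; assuming $\kappa\le\bar\varepsilon$ we have $\|u-P_0\|_{L^\infty(B_1)}\le\kappa$. Given $P_k$ with $D^2P_k\in\mathcal S$ and $\|u-P_k\|_{L^\infty(B_{\rho^k})}\le\rho^{k(2+\alpha)}\kappa$, apply the lemma to the parabolic rescaling $w_k(x):=\rho^{-2k}\big(u(\rho^kx)-P_k(\rho^kx)\big)$, which solves $F_k(D^2w_k)=0$ with $F_k(N):=F(N+D^2P_k)$ --- an operator still in the nice class, with the same $\lambda,\Lambda$, vanishing at $0$, with linearization $DF(D^2P_k)$ --- and which has $\|w_k\|_{L^\infty(B_1)}\le\rho^{k\alpha}\kappa\le\bar\varepsilon$. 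The lemma returns a quadratic $\widetilde P_k$; set $P_{k+1}(x):=P_k(x)+\rho^{2k}\widetilde P_k(x/\rho^k)$, which obeys $D^2P_{k+1}\in\mathcal S$, $\|u-P_{k+1}\|_{L^\infty(B_{\rho^{k+1}})}\le\rho^{(k+1)(2+\alpha)}\kappa$, and has increments bounded by $C_1\rho^{k\alpha}\kappa$ in the Hessian, with faster geometric decay in the lower-order coefficients. Hence $P_k\to P_\infty$, a quadratic polynomial with $D^2P_\infty\in\mathcal S$ and $\|u-P_\infty\|_{L^\infty(B_r)}\le Cr^{2+\alpha}\kappa$ for all small $r$: $u$ has a second-order Taylor expansion at $0$ with the quantitative $C^{2,\alpha}$ bound. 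Running the same argument at every $x_0\in B_{1/2}$, with universal constants, and combining the resulting uniform pointwise expansions with $\|u\|_{L^\infty(B_1)}\le\kappa$, gives $\|u\|_{C^{2,\alpha}(B_{1/2})}\le C(\alpha)\|u\|_{L^\infty(B_1)}$. I note that Savin's original proof avoids the compactness step by establishing a direct measure (ABP-type) estimate together with explicit barriers, which makes the whole scheme insensitive to rough, merely measurable dependence of $F$ on $x$; it is presumably that more hands-on route which is adapted to the nonlocal equations of the present paper.
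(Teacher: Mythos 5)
Your argument is correct, and it is the standard Caffarelli/Armstrong--Silvestre--Smart route: a compactness proof of an improvement-of-flatness lemma (competitors are quadratics with Hessian on the level set $\mathcal S=\{F=0\}$), followed by a geometric iteration that produces a pointwise $C^{2,\alpha}$ expansion at every point of $B_{1/2}$. It does, however, differ from what the paper actually offers for this statement. The paper does not prove Theorem~1.1; it states it, cites Savin~[S], and gives in the introduction only the ``simple compactness'' sketch --- normalize $u_k$ by $\kappa_k$, pass to a locally uniform limit $u^*$, note $u^*$ solves a constant-coefficient linear equation --- without the iteration. Taken literally that sketch does not close: locally uniform convergence of $\tilde u_k$ to a smooth $u^*$ is not in tension with $\|\tilde u_k\|_{C^{2,\alpha}}\to\infty$, so some quantitative bridge (your improvement-of-flatness lemma, or Savin's ABP/barrier machinery) is needed; your proposal supplies exactly that bridge. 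The trade-off between the two compactness variants is worth noting: your iteration of the flatness lemma needs a controlled family of competitor polynomials with Hessian on $\mathcal S$ and re-verification of the hypotheses after each recentering $F\mapsto F(\cdot+D^2P_k)$, whereas the paper's own rigorous argument for the nonlocal analogue (Theorem~3.1) follows Serra instead: it blows up once, at a carefully chosen degenerate scale $r_k$ where a weighted H\"older quantity peaks, obtains a \emph{global} solution with polynomial growth, and invokes a Liouville theorem --- a route forced on one in the nonlocal setting because the data at scale one lives on all of $\mathbb{R}^n$ and cannot be controlled by iteration inside $B_1$. Your closing guess that the paper adapts Savin's ``more hands-on'' ABP/barrier argument is therefore off the mark; what the paper adapts is Serra's blow-up compactness scheme, not Savin's measure-theoretic one.
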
 

Here a `nice' operator enjoys certain regularity properties near $0$. To be precise, $F$ is required to be $C^2$ with bounded Hessian in \cite{S}. This condition can be relaxed, see for example \cite{ASS}.

Compare with the classical result by Evans \cite{E} and Krylov \cite{K}, which gives a $C^{2,\alpha}$-estimate for concave operators, Savin's replaces a structural concavity condition on the operator by a regularity condition together with a smallness condition on the solution. Its significance lies in the fact that actual solutions often reduce to small perturbation solutions once one subtracts some standard objects, e.g. Taylor polynomials, supporting parabola, etc. As a result, there have been numerous applications of this result. To name a few,  Armstrong-Silvestre-Smart on partial regularity \cite{ASS},  Armstrong-Silvestre on unique continuation \cite{AS}, Collins on $C^{2,\alpha}$ estimates of equations of twisted type \cite{C}. 

We'd like to point out that for nonlocal equations, partial regularity and unique continuation remain open, and are actually the main motivation for this work. If one follows the strategy of Armstrong-Silvestre-Smart and Armstrong-Silvestre, the main ingredients needed are a small perturbation result, a unique continuation result for linear operators \cite{FF} \cite{Se} \cite{R} and a $W^{\sigma,\epsilon}$-estimate \cite{Yu}. 

For second order equations, the small perturbation result follows from a simple compactness argument. Suppose the result is false, then one finds a sequence of operators $F_k$ and a sequence of solutions $u_k$ such that $$F_k(D^2u_k)=0 \text{ in $B_1$}$$ and $$\|u_k\|_{\mathcal{L}^{\infty}(B_1)}=\kappa_k\to 0,$$ but $$\|u_k\|_{C^{2,\alpha}(B_{1/2})}\ge k\kappa_k.$$ In particular $\tilde{u}_k:=u_k/\kappa_k$ satisfies $$\frac{1}{\kappa_k}F_k(\kappa_kD^\sigma \tilde{u}_k)=0,$$ $$\|\tilde{u}_k\|_{\mathcal{L}^{\infty}(B_1)}=1$$ and $$\|\tilde{u}_k\|_{C^{2,\alpha}(B_{1/2})}\ge k.$$ 

Now with the regularity assumptions on the operators, $\frac{1}{\kappa_k}F_k(\kappa_k\cdot)$ converges to a constant coefficient linear operator, the `derivative' at $0$. Meanwhile, uniform H\"older estimate gives a locally uniform limit $u^*$ of $\{\tilde{u}_k\}$. Stability says $u^*$ solves the constant coefficient linear elliptic equation and hence enjoys extremely nice regularity properties, contradicting the large $C^{2.\alpha}$-norm of $\tilde{u}_k$. For details on uniform H\"older estimate as well as the stability for elliptic equations, one can consult \cite{CC}.

This gives the desired result for second order equations, which says that `nice' dependence on the Hessian matrix and smallness of the solution imply regularity of the solution. For a nonlocal version of this proof, however, one faces several difficulties. Firstly, one needs a fractional order replacement for the Hessian matrix. This should be some object that records  directional fractional order curvatures. Although there is no canonical way to define such objects at this stage, the operator $D^{\sigma}$, as studied in \cite{Yu2}, seems natural for this task, its definition  given in the next section.

Another major difficulty for nonlocal operators comes from the failure of the compactness argument.  For second order equations, a uniform $\mathcal{L}^\infty$ control over the solutions gives compactness of the sequence in the \textit{entire domain} via the H\"older estimate. For instance, solutions to equations in $B_1$ will converge locally uniformly in the entire $B_1$. For fractional order equations, however, the best one can hope for is convergence in the \textit{domain of the equation}, while there is no hope for convergence on complement of that domain if one only assumes $\mathcal{L}^\infty$ control. Solutions to equations in $B_1$ will converge in $B_1$, but these solutions are defined in the entire $\mathbb{R}^n$ and there is no convergence in $\mathbb{R}^n\backslash B_1$. The reason for this contrast is that the boundary data for second order equations live in lower dimensional sets while the boundary data for fractional order equations live in sets of full dimensions.

Fortunately for us, in a series of papers \cite{Ser1}\cite{Ser2} Serra showed one possible way to deal with this problem. The new ingredient is a blow-up type argument. By zooming in at certain points along the sequence, the scaled sequence solve equations in larger and larger domains that converge to the entire space. Furthermore, if one zooms in at the right rate, the scaled sequence enjoy certain growth condition. This gives a limit that is a global solution. For such solutions the contrast between local and nonlocal equations become less problematic since effectively there is no `boundary'. Then one uses a Liouville type theorem to characterize all possible global profiles, and closeness to these global profiles gives regularity for the solution to the original problem. 

We very much follow this type of argument with certain modifications to prove the following main result of this work:

\begin{thm}
Let  $F:S^n\to \mathbb{R}$ be a uniformly elliptic operator with ellipticity constants $\lambda$ and $\Lambda$. Assume it is continuously differentiable, and $DF$ has modulus of continuity $\omega$. $F(0)=0$.

Given $\alpha\in(0,1)$ and $\sigma\in(0,2)$ such that $\sigma+\alpha$ is not an integer, there exist constants $\kappa=\kappa(n, \lambda,\Lambda,\omega,\alpha,\sigma)>0$ and $C=C(n, \lambda,\Lambda,\omega,\alpha,\sigma)<\infty$ such that the viscosity solution to $$F(D^\sigma u)=0 \text{ in $B_1$}$$ satisfies the estimate $$\|u\|_{C^{\sigma+\alpha}(B_{1/2})}\le C(\|u\|_{\mathcal{L}^{\infty}(B_1}+\|u\|_{\mathcal{L}^1(\frac{1}{1+|y|^{n+\sigma}}dy)})$$ whenever $$\|u\|_{\mathcal{L}^{\infty}(B_1)}+\|u\|_{\mathcal{L}^1(\frac{1}{1+|y|^{n+\sigma}}dy)}\le\kappa.$$ \end{thm}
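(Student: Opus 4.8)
\noindent\emph{Plan of the proof.} Let $\mathcal N(u):=\|u\|_{L^\infty(B_1)}+\|u\|_{L^1(\frac{1}{1+|y|^{n+\sigma}}\,dy)}$ denote the quantity on the right of the statement. By a routine covering argument using the translation invariance of the equation, it suffices to prove the pointwise estimate at the origin: there is a polynomial $P$, $\deg P\le\lfloor\sigma+\alpha\rfloor$, with $\sup_{B_r}|u-P|\le C\,\mathcal N(u)\,r^{\sigma+\alpha}$ for every $r\in(0,\tfrac12)$ and $C=C(n,\lambda,\Lambda,\omega,\alpha,\sigma)$; applying this at every point of $B_{1/2}$ yields the $C^{\sigma+\alpha}(B_{1/2})$ bound. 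One cannot renormalise $u$ by $\mathcal N(u)$ because $F$ is nonlinear; the hypothesis $\mathcal N(u)\le\kappa$ will instead be used to keep the operators that arise uniformly close to a fixed linear one. As a preliminary I would record an a priori interior estimate: since $F$ is uniformly elliptic with $F(0)=0$, the equation $F(D^\sigma u)=0$ puts $u$ among the sub- and supersolutions of the extremal operators attached to $D^\sigma$, so by the regularity theory for translation-invariant nonlocal uniformly elliptic equations (see, e.g., \cite{Yu} and the references therein; \cite{CC} for the local prototype) one has $u\in C^{\sigma+\epsilon}_{\mathrm{loc}}(B_1)$ for some $\epsilon(n,\lambda,\Lambda)>0$ with $[u]_{C^{\sigma+\epsilon}(B_{3/4})}\le C\,\mathcal N(u)$; in particular $D^\sigma u$ is bounded on $B_{3/4}$ with $\|D^\sigma u\|_{L^\infty(B_{3/4})}\le C\,\mathcal N(u)\le C\kappa$.

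Next comes linearisation. From $0=F(D^\sigma u)-F(0)=DF(0):D^\sigma u+\bigl(DF(\xi)-DF(0)\bigr):D^\sigma u$ (with $\xi$ on the segment joining $0$ to $D^\sigma u(x)$) and the smallness of $D^\sigma u$ one obtains that $u$ solves $Lu=f$ in $B_{3/4}$, where $L:=DF(0):D^\sigma$ is a linear, translation-invariant, uniformly elliptic operator of order $\sigma$ and $\|f\|_{L^\infty(B_{3/4})}\le\omega(C\kappa)\,C\kappa$ is as small as we please. I would use two facts about the fixed operator $L$, to be cited or proved as preliminaries (see \cite{Yu2}): (i) interior estimates — solutions of $Lv=0$ in $B_1$ obey $\|v\|_{C^{\lfloor\sigma+\alpha\rfloor+1}(B_{1/2})}\le C(\|v\|_{L^\infty(B_1)}+\text{tail})$, where $\lfloor\sigma+\alpha\rfloor+1>\sigma+\alpha$ since $\sigma+\alpha\notin\mathbb Z$; and (ii) a Liouville theorem — a suitably interpreted entire solution of $Lv=0$ with $|v(x)|\le C(1+|x|^{\sigma+\alpha})$ is a polynomial of degree $\le\lfloor\sigma+\alpha\rfloor$, and for the degrees occurring here every such polynomial is itself an $L$-solution (immediate when the degree is $<\sigma$, since $D^\sigma$ annihilates such polynomials; when the degree is $2$, forcing $\sigma\in(1,2)$, the constant matrix $D^\sigma q$ produced by a quadratic $q$ is absorbed into a harmless modification of $F$).

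The heart of the proof is a blow-up argument of the type used by Serra \cite{Ser1,Ser2}. Suppose the pointwise estimate at $0$ fails for every constant $C$. Then along a sequence $u_m$ and scales $r_m\to0$ — choosing $r_m$ as the largest scale at which the estimate is violated — the best degree-$\lfloor\sigma+\alpha\rfloor$ polynomial-approximation error of $u_m$ on $B_{r_m}$ exceeds $m\,\mathcal N(u_m)\,r_m^{\sigma+\alpha}$ but stays controlled at all larger dyadic scales. Rescaling $u_m$ about $0$ at scale $r_m$, subtracting the optimal polynomials, and dividing by the (small but positive) violated quantity produces functions $w_m$ with $\|w_m\|_{L^\infty(B_1)}\sim1$ (the non-degeneracy handed to us by the failure), controlled growth $|w_m(x)|\le C(1+|x|^{\sigma+\alpha})$ on $B_{1/r_m}$, and equations $\widetilde F_m(D^\sigma w_m)=0$ in $B_{1/r_m}$ with $\widetilde F_m\to L$ (here $\mathcal N(u_m)\le\kappa$ and the boundedness of the accumulated polynomial coefficients are used). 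The robust interior H\"older estimate for the extremal class — which depends only on an $L^\infty$-norm on a large ball plus a weak tail — gives a locally uniform limit $w_\infty$; stability yields $Lw_\infty=0$ on $\mathbb R^n$ with $|w_\infty(x)|\le C(1+|x|^{\sigma+\alpha})$, so by (ii) $w_\infty$ is a polynomial of degree $\le\lfloor\sigma+\alpha\rfloor$; then its optimal polynomial-approximation error vanishes, contradicting the non-degeneracy of $w_m$. This delivers both the expansion and the constant $C$. Equivalently, one may package the step as an improvement-of-flatness lemma proved by compactness in $B_1$ — with (i) in place of the Liouville step — and iterate it along dyadic scales; the tail issue discussed below appears either way.

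The principal obstacle is the one emphasised in the introduction. Under an $L^\infty(B_1)$-plus-weighted-$L^1$ bound, solutions of nonlocal equations are not compact outside the domain, so $w_\infty$ cannot be extracted naively; worse, the weighted-$L^1$ tail of the statement is not preserved by the rescaling — indeed the subtracted polynomials of degree $\lfloor\sigma+\alpha\rfloor$ need not be integrable against $(1+|y|)^{-n-\sigma}$, and even when they are the rescaled tail diverges as $r_m\to0$. Following Serra, one propagates instead scale-dependent pointwise growth bounds, $\sup_{B_{2^j r_m}}|u_m-(\text{optimal polynomial})|\lesssim\mathcal N(u_m)(2^j r_m)^{\sigma+\alpha}$ up to scale $1$, together with the original weak tail beyond scale $1$; after rescaling these become $|w_m(x)|\le C(1+|x|^{\sigma+\alpha})$ on $B_{1/r_m}$ plus a residual tail, which is exactly the input the robust estimates need. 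Getting this bookkeeping airtight — the choice of $r_m$, the control of the polynomial coefficients and of the two tail regimes across all scales, and the reconciliation of the order-$\sigma$ operator $D^\sigma$ with limit profiles of super-$\sigma$ growth (via the polynomial subtraction and, for $\sigma>1$, the second-difference form of $D^\sigma$, which annihilates the linear part) — is where essentially all the technical effort goes; by contrast the a priori estimate, the linearisation, and the Liouville input are comparatively soft.
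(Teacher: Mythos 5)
Your overall strategy — blow-up, compactness, stability, Liouville, in the spirit of Serra — is the same as the paper's. But there are two concrete gaps, and they are not bookkeeping issues; they are the points the paper's argument is built to overcome.

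\textbf{The a priori estimate is false as stated.} You claim that, directly from the regularity theory for translation-invariant nonlocal uniformly elliptic equations, one has $u\in C^{\sigma+\epsilon}_{\mathrm{loc}}(B_1)$ for some $\epsilon(n,\lambda,\Lambda)>0$. This is not available. The Caffarelli--Silvestre interior estimate for this class gives $C^{1,\bar\alpha}$ for a small universal $\bar\alpha$, which is weaker than $C^\sigma$ whenever $\sigma>1+\bar\alpha$. This matters twice in your plan: it is what you use to make $D^\sigma u$ pointwise-defined and small (hence to write the linearised equation $Lu=f$), and it is implicitly what makes the polynomial subtraction and scale-by-scale growth bounds meaningful. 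The paper precisely does \emph{not} take $C^{\sigma+\epsilon}$ for granted: it proves an improvement-of-regularity theorem (Theorem~3.1) that upgrades a $C^{\sigma+\alpha'}$ bound with the same integer part to $C^{\sigma+\alpha}$, starts from the genuine a priori $C^{1,\bar\alpha}$ estimate, applies Theorem~3.1 to cover $1<\sigma+\alpha<2$, \emph{then} differentiates the equation (legitimate only after the first bootstrap gives $u\in C^{\sigma+\tilde\alpha}$ with $\sigma+\tilde\alpha>2$), obtains a linear nonlocal equation for $u_e$ with $C^{\tilde\alpha}$ coefficients, invokes nonlocal Schauder theory, and applies Theorem~3.1 again in the range $\nu=2$. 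This staircase is where the real work lives; calling the a priori step ``soft'' is exactly backwards.

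\textbf{The quadratic case of the Liouville step is handled incorrectly.} You say that for a quadratic $q$ (the case $\sigma\in(1,2)$, $\lfloor\sigma+\alpha\rfloor=2$) the ``constant matrix $D^\sigma q$'' can be absorbed into $F$. But for quadratic $q=\tfrac12\langle Ax,x\rangle$ one has $\delta q(x,y)=\langle Ay,y\rangle$, so the integrand of $D^\sigma_{ij}q$ behaves like $|y|^{-(n+\sigma-2)}$ at infinity, and the integral \emph{diverges} for every $\sigma<2$. There is no constant matrix to absorb. After subtracting a quadratic the rescaled profile grows too fast at infinity for $D^\sigma$ to be defined at all. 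The paper's Liouville theorem (Theorem~2.4, Remark~2.6) is therefore formulated not for $u$ but for the increments $u(\cdot+h)-u$, exploiting the exact cancellation $\delta\bigl(q(\cdot+h)-q\bigr)(x,y)=0$; the corresponding equation for $\tilde v^h_k-\tilde v_k$ has to be derived by hand from $F_k$ (the paper's Lemma~3.2), and the limit is an entire solution of the fractional Laplacian for \emph{differences}, which is then shown to be a polynomial. You gesture at the second-difference form, but only for killing the linear part; the quadratic obstruction is the nontrivial one and your fix does not work.

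On the remaining points you are close to the paper: both proofs normalise at a critical scale (the paper uses the quantity $\theta_k(r)=\sup_{r<r'<1/2}\sup_z (r')^{\alpha'-\alpha}[u_k]_{C^{\sigma+\alpha'}(B_{r'}(z))}$, you propose the best degree-$\nu$ polynomial approximation error; these are interchangeable variants of the same idea), both linearise the operator via $\frac{1}{\epsilon}F(\epsilon\,\cdot)\to DF(0):(\cdot)$ using the modulus of continuity $\omega$, and both pass to a global limit via a robust $L^\infty$-plus-tail Hölder estimate and viscosity stability. If you incorporate the two missing ingredients — the $C^{1,\bar\alpha}\to C^{\sigma+\alpha}$ bootstrap through an explicit improvement-of-regularity theorem, and the increment form of the Liouville theorem for the $\nu=2$ case — the rest of your outline matches the paper.
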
   

Here $S^n$ is the space of $n\times n$ symmetric matrices. $\|\cdot\|_{C^{\sigma+\alpha}}$ is to be understood as $\|\cdot\|_{C^{\nu,\beta}}$ where $\nu$ is the integer part of $\sigma+\alpha$ and $\beta=\sigma+\alpha-\nu$. For definition of viscosity solutions to nonlocal operators, see \cite{CS1}.

We'd also like to point out that with the method in this paper, it is not too difficult to cover operators that involve lower order terms or more general $\sigma$-order Hessian matrices with weights as considered in \cite{Yu2}. It is also possible to prove uniform estimates as $\sigma\to 2$ and hence recover the result of Savin in the limit. However the proof is already rather involved, and we decide not to pursuit these interesting points.

This paper is organized as follows: In the next section we recall some definitions and preliminary results that will be useful for later sections. We also prove a Liouville type theorem. It is used in the third section to study blow-up limits of solutions, and to give an improvement of regularity. In the last section of this paper this improvement of regularity is combined with previously known regularity estimates to complete the proof for the main result.

\section{Preliminaries and the Liouville theorem}

We first define our replacement of the Hessian matrix. For each $(i,j)$ it records the $\sigma$-order curvature in the direction $(e_i,e_j)$. The reader should consult \cite{Yu2} for a more general version that involves weights.

\begin{defi}
For $u:\mathbb{R}^n\to\mathbb{R}$ with $$\int|\delta u(x,y)|\frac{1}{|y|^{n+\sigma}}dy<\infty,$$ its $\sigma$-order Hessian at $x$, $D^{\sigma}u(x)$, is a matrix with $(i,j)$-entry  $$\int\delta u(x,y)\frac{\langle e_i,y\rangle\langle e_j,y\rangle}{|y|^{n+\sigma+2}}dy.$$
\end{defi}

Here $\{e_i\}_{1\le i\le n}$ is the standard basis for $\mathbb{R}^n$. $\delta u(x,y)=u(x+y)+u(x-y)-2u(x)$ is the symmetric difference.

The following is a compactness result for sequence of operators.

\begin{prop}
Let $F_k:S^n\to\mathbb{R}$ be a sequence of uniformly elliptic operators with the same elliptic constants $\lambda$ and $\Lambda$. $F_k(0)=0$. Suppose $F_k$ are $C^1$ with $DF_k$ satisfying modulus of continuity $\omega$.

For any $\delta_k\to 0$, define $E_k:S^n\to\mathbb{R}$ by $$E_k(M)=\frac{1}{\delta_k}F_k(\delta_k M).$$

Then up to a subsequence $E_k$ converges locally uniformly to a constant coefficient linear operator.
\end{prop}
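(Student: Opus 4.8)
The plan is to differentiate at the origin. Since each $F_k$ is $C^1$ with $F_k(0)=0$, the fundamental theorem of calculus gives, for every $M\in S^n$,
$$E_k(M)=\frac{1}{\delta_k}\bigl(F_k(\delta_k M)-F_k(0)\bigr)=\int_0^1 \mathrm{tr}\bigl(A_k(t\delta_k M)\,M\bigr)\,dt,$$
where $A_k(X)\in S^n$ is the symmetric matrix representing the differential $DF_k(X)$, i.e. $DF_k(X)N=\mathrm{tr}(A_k(X)N)$ for all $N\in S^n$. Uniform ellipticity with constants $\lambda,\Lambda$ forces every $A_k(X)$ to have eigenvalues in $[\lambda,\Lambda]$: test the ellipticity inequality against rank-one increments $N=t\,e\otimes e$ with $e$ a unit vector, divide by $t$, and let $t\to0$. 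In particular $\{A_k(0)\}_k$ is bounded in $S^n$, so after passing to a subsequence $A_k(0)\to A_\infty$ for some $A_\infty\in S^n$ whose eigenvalues again lie in $[\lambda,\Lambda]$.

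Next I would show that $E_k\to L$ locally uniformly, where $L(M):=\mathrm{tr}(A_\infty M)$. Fix $R>0$ and $M$ with $|M|\le R$. Using the shared modulus of continuity $\omega$ of $DF_k$ (so $|A_k(X)-A_k(Y)|\le\omega(|X-Y|)$) and the integral formula above,
$$\bigl|E_k(M)-\mathrm{tr}(A_k(0)M)\bigr|\le\int_0^1\bigl|A_k(t\delta_k M)-A_k(0)\bigr|\,|M|\,dt\le\omega(\delta_k R)\,R.$$
Combining this with $\bigl|\mathrm{tr}(A_k(0)M)-\mathrm{tr}(A_\infty M)\bigr|\le|A_k(0)-A_\infty|\,R$ yields
$$\sup_{|M|\le R}\bigl|E_k(M)-L(M)\bigr|\le\omega(\delta_k R)\,R+|A_k(0)-A_\infty|\,R\longrightarrow 0$$
as $k\to\infty$, since $\delta_k\to0$ and $\omega(0^+)=0$. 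As $L$ is linear with constant coefficients — indeed uniformly elliptic with the same constants $\lambda,\Lambda$ — this is exactly the assertion of the proposition.

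The argument is essentially routine, and there is no serious obstacle; the only point requiring care is the bookkeeping in the first step. One must consistently identify the differentials $DF_k(X)$ with symmetric matrices through the trace pairing, and one must check that uniform ellipticity delivers the uniform eigenvalue bound on the $A_k(X)$ — this is what simultaneously gives compactness of $\{A_k(0)\}$ and uniform ellipticity of the limit $L$. Once that is in place, the convergence is a one-line estimate, and the hypothesis $\delta_k\to0$ is precisely what annihilates the error term $\omega(\delta_k R)\,R$ on compact subsets of $S^n$.
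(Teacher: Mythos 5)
Your proof is correct and takes essentially the same route as the paper: extract a convergent subsequence of $DF_k(0)$ using the uniform ellipticity bound, write the difference quotient via the fundamental theorem of calculus, and control the error by the shared modulus of continuity $\omega$ plus the convergence $DF_k(0)\to A$. The only difference is cosmetic — you spell out the identification of $DF_k(X)$ with a symmetric matrix through the trace pairing and justify the eigenvalue bound via rank-one tests, details the paper leaves implicit.
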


\begin{proof}
Since $\lambda\le DF_k(0)\le\Lambda$, up to a subsequence one has $DF_k(0)\to A\in S^n.$ We prove that up to a subsequence, $E_k$ converges to the linear operator $M\in S^n\mapsto\Sigma A_{ij}M_{ij}\in\mathbb{R}$.

\begin{align*}|E_k(M)-\Sigma A_{ij}M_{ij}|&=|\frac{1}{\delta_k}F_k(\delta_k M)-\Sigma A_{ij}M_{ij}|\\&=|\frac{1}{\delta_k}(F_k(\delta_k M)-F_k(0))-DF_k(0)M+DF_k(0)M-\Sigma A_{ij}M_{ij}|\\&\le|\frac{1}{\delta_k}(F_k(\delta_k M)-F_k(0))-DF_k(0)M|+|DF_k(0)-A||M|.
\end{align*} Note that $$F_k(\delta_k M)-F_k(0)=\int_0^1\frac{d}{dt}F_k(t\delta_kM)dt=\int_0^1DF_k(t\delta_kM)dt\cdot\delta_kM,$$ regularity of $DF_k$ leads to the following estimate
\begin{align*}|\frac{1}{\delta_k}(F_k(\delta_k M)-F_k(0))-DF_k(0)M|&=|\int_0^1DF_k(t\delta_kM)dt\cdot M-DF_k(0)M|\\&\le|\int_0^1DF_k(t\delta_kM)-DF_k(0)dt\cdot M|\\&\le\omega(\delta_k|M|)|M|.
\end{align*}
Consequently $$|E_k(M)-\Sigma A_{ij}M_{ij}|\le\omega(\delta_k|M|)|M|+|DF_k(0)-A||M|.$$
\end{proof} 

We will also need the following stability result for our operators. For some related stability result for nonlocal operators, see \cite{CS2}.

\begin{prop}
Let $F_k:S^n\to\mathbb{R}$ be a sequence of uniformly elliptic operators with the same ellipticity constants. $u_k\in\mathcal{L}^\infty(\mathbb{R}^n)\cap C(\bar{B_1})$ are viscosity solutions to $$F_k(D^\sigma u_k)=0 \text{ in $B_1$}.$$If $F_k\to F$ locally uniformly over $S^n$, and $u_k\to u$ locally uniformly over $\mathbb{R}^n$, then $$F(D^\sigma u)=0 \text{ in $B_1$}.$$
\end{prop}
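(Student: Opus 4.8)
The plan is to prove this stability result by the standard viscosity-solution argument adapted to the nonlocal setting, in the spirit of the stability theory in \cite{CS1}, \cite{CS2}. It suffices to show that $u$ is a viscosity subsolution, i.e. $F(D^\sigma u) \ge 0$ in $B_1$ (the supersolution case being symmetric). So I would fix $x_0 \in B_1$ and a test function $\varphi \in C^2$ in a neighborhood $N$ of $x_0$ that touches $u$ from above at $x_0$, strictly in $N \setminus \{x_0\}$. The object one evaluates is the mixed quantity $v = \varphi$ in $N$, $v = u$ outside $N$; then $D^\sigma v(x_0)$ makes sense (the singular part near $y=0$ is controlled by $D^2\varphi$, the tail by the $\mathcal{L}^1(\tfrac{1}{1+|y|^{n+\sigma}}dy)$ bound on $u$ which passes to the limit since convergence is locally uniform and there is a uniform tail bound built into the hypothesis $u_k \in \mathcal{L}^\infty$). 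The goal is then $F(D^\sigma v(x_0)) \ge 0$.

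The key step is to produce, from the locally uniform convergence $u_k \to u$, a sequence of points $x_k \to x_0$ at which suitable small perturbations $\varphi_k$ of $\varphi$ touch $u_k$ from above. The standard device: since $\varphi$ touches $u$ strictly from above at $x_0$ in $N$, and $u_k \to u$ uniformly on $\bar N$, for each $k$ the function $u_k - \varphi$ attains its maximum over $\bar N$ at some interior point $x_k$, and $x_k \to x_0$. One then defines $\varphi_k = \varphi + (u_k(x_k) - \varphi(x_k))$, a vertical translate, so that $\varphi_k$ touches $u_k$ from above at $x_k$ within $N$. Setting $v_k = \varphi_k$ in $N$ and $v_k = u_k$ outside $N$, the subsolution property of $u_k$ gives $F_k(D^\sigma v_k(x_k)) \ge 0$. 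Now I would pass to the limit: $D^\sigma v_k(x_k) \to D^\sigma v(x_0)$, using that $D^2\varphi$ is continuous so the inner integral $\int_{B_r} \delta \varphi_k(x_k,y)\tfrac{\langle e_i,y\rangle\langle e_j,y\rangle}{|y|^{n+\sigma+2}}\,dy \to \int_{B_r}\delta\varphi(x_0,y)\tfrac{\langle e_i,y\rangle\langle e_j,y\rangle}{|y|^{n+\sigma+2}}\,dy$, and that the outer (tail) integral $\int_{\mathbb{R}^n \setminus B_r}\delta u_k(x_k,y)\tfrac{\langle e_i,y\rangle\langle e_j,y\rangle}{|y|^{n+\sigma+2}}\,dy \to \int_{\mathbb{R}^n \setminus B_r}\delta u(x_0,y)\tfrac{\langle e_i,y\rangle\langle e_j,y\rangle}{|y|^{n+\sigma+2}}\,dy$ by dominated convergence, with the dominating function coming from the uniform $\mathcal{L}^\infty$ bound and the $\tfrac{1}{1+|y|^{n+\sigma}}$ weight. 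Combined with $F_k \to F$ locally uniformly, this yields $F(D^\sigma v(x_0)) \ge 0$, as desired.

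The main obstacle, and the point requiring the most care, is the passage to the limit in the \emph{tail} of $D^\sigma v_k(x_k)$. Unlike in the second-order setting, the value of $D^\sigma v_k$ at $x_k$ depends on $u_k$ on all of $\mathbb{R}^n$, and locally uniform convergence of $u_k$ alone does not control the integral $\int |\delta u_k(x_k,y)|\tfrac{1}{|y|^{n+\sigma}}\,dy$ uniformly in $k$. I would handle this by exploiting that the hypothesis only asks for $u_k \in \mathcal{L}^\infty(\mathbb{R}^n)$ — but for this to give a clean limit one genuinely wants a uniform bound on these weighted norms, so I would either (a) note that in the application the $u_k$ are uniformly bounded in $\mathcal{L}^\infty(\mathbb{R}^n)$ by $\kappa$ (so $|\delta u_k(x,y)| \le 4\kappa$ pointwise, and $\tfrac{4\kappa}{|y|^{n+\sigma}}\mathbf{1}_{|y|\ge 1}$ is integrable), or (b) more carefully, split $\int_{\mathbb{R}^n\setminus B_r} = \int_{B_R \setminus B_r} + \int_{\mathbb{R}^n \setminus B_R}$, use locally uniform convergence plus equicontinuity on the middle annulus, and use the uniform $\mathcal{L}^\infty$ tail bound to make $\int_{\mathbb{R}^n\setminus B_R}$ small uniformly in $k$ by choosing $R$ large. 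A secondary technical point is the standard reduction allowing $\varphi$ to be assumed $C^2$ near $x_0$ and touching \emph{strictly} from above (so that $x_k$ stays in the interior of $N$ and $x_k \to x_0$); this is routine and I would dispatch it with the usual sup-convolution / strict-touching perturbation trick. Everything else — the existence of the maximizers $x_k$, the vertical translation, the semicontinuity bookkeeping — is as in the local theory.
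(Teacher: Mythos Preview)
Your proposal is correct and follows essentially the same approach as the paper's proof: strict touching from above, extraction of touching points $x_k \to x_0$ for the sequence, splitting the $D^\sigma$ integral into an inner piece (handled by smoothness of the test function) and an outer piece (handled by dominated convergence against the $|y|^{-n-\sigma}$ kernel), then passing to the limit using $F_k \to F$. Your explicit vertical translation and your careful discussion of the uniform tail bound are minor elaborations that the paper's proof glosses over (it simply invokes ``the integrability of $1/|y|^{n+\sigma}$''), but the architecture is identical.
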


\begin{proof}
Let $\phi$ be a smooth function \textit{strictly} touching $u$ from above at some $x_0\in B_1$. 

For some small $r>0$ define $$\tilde{\phi}(x)=\begin{cases}\phi(x), &x\in B_r(x_0)\\u(x), &x\not\in B_r(x_0)\end{cases},$$ and  $$\tilde{\phi}_k(x)=\begin{cases}\phi(x), &x\in B_r(x_0)\\u_k(x), &x\not\in B_r(x_0)\end{cases}.$$

By continuity, there exists $x_k\in B_r(x_0) $ such that $u_k(x_k)-\phi(x_k)\ge u_k(x)-\phi(x)$ for all $x\in B_r(x_0)$. Since $\phi$ is strictly touching $u$ at $x_0$ and $u_k$ are converging to $u$ uniformly in $B_r$, we have $x_k\to x_0$. Thus for large $k$ we might assume $|x_k-x_0|\le \frac{1}{2}r$.

Since $u_k$ are solutions, one has $$F_k(D^\sigma\tilde{\phi}_k(x_k))\ge 0.$$

Now for each $(i,j)$, 
\begin{align*}
|D^\sigma\tilde{\phi}_k(x_k)-D^\sigma\tilde{\phi}(x_0)|&=|\int(\delta \tilde{\phi}_k(x_k,y)-\delta \tilde{\phi}(x_0,y))\frac{\langle e_i,y\rangle\langle e_j,y\rangle}{|y|^{n+\sigma+2}}dy|\\&=|\int_{|y|\le\frac{1}{2}r}+\int_{|y|>\frac{1}{2}r}(\delta \tilde{\phi}_k(x_k,y)-\delta \tilde{\phi}(x_0,y))\frac{\langle e_i,y\rangle\langle e_j,y\rangle}{|y|^{n+\sigma+2}}dy|\\&\le|\int_{|y|\le\frac{1}{2}r}(\delta\phi(x_k,y)-\delta\phi(x_0,y))\frac{\langle e_i,y\rangle\langle e_j,y\rangle}{|y|^{n+\sigma+2}}dy|\\&+|\int_{|y|>\frac{1}{2}r}(\delta \tilde{\phi}_k(x_k,y)-\delta \tilde{\phi}(x_0,y))\frac{\langle e_i,y\rangle\langle e_j,y\rangle}{|y|^{n+\sigma+2}}dy|.
\end{align*}

Now for the first term we can use the smoothness of $\phi$, and for the second we can use the integrability of $\frac{1}{|y|^{n+\sigma}}$ to see both are converging to $0$. As a result, uniform convergence of $F_k$ to $F$ gives  
$$F(D^\sigma\tilde{\phi}(x))\ge 0.$$

Therefore, $u$ is a subsolution. By similar argument one shows that $u$ is also a supersolution.
\end{proof}

The following Liouville-type theorem is the key to study blow-up limit of solutions. Similar results were first shown by Serra in \cite{Ser1}\cite{Ser2} to deal with the lack of control for boundary data of nonlocal equations.

\begin{thm}
Let $0<\alpha'<\alpha<1$ be such that $\alpha'+\sigma$ and $\alpha+\sigma$ have the same integer part $\nu$. Also $\alpha'+\sigma-\nu$ and $\alpha+\sigma-\nu$ are both non-zero.

Suppose for each $0\le\beta\le\sigma+\alpha'$ one has the growth condition \begin{equation}
[u]_{C^{\beta}(B_R)}\le R^{\sigma+\alpha-\beta} \text{  for $R\ge 1$},
\end{equation} and \begin{equation}
\int \delta(u(\cdot+h)-u)(x,y)\frac{1}{|y|^{n+\sigma}}dy=0 \text{  for $h\in\mathbb{R}^n$}.
\end{equation} Then $u$ is a polynomial of degree $\nu$.
\end{thm}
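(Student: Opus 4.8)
The plan is to exploit the two hypotheses in complementary ways: condition (2) is a linear PDE (a constant-coefficient, but degenerate/zero, fractional equation) satisfied by all finite differences of $u$, while condition (1) is a polynomial growth bound. The standard route for Liouville theorems of this type is to differentiate (take finite differences) enough times to reach a function with \emph{sublinear} growth that still solves a translation-invariant equation, conclude it is constant, and then integrate back up. So first I would iterate condition (2): for any $h_1,\dots,h_m\in\mathbb{R}^n$ the iterated difference $u_{h_1,\dots,h_m}:=\Delta_{h_1}\cdots\Delta_{h_m}u$ (where $\Delta_h v=v(\cdot+h)-v$) again satisfies $\int \delta u_{h_1,\dots,h_m}(x,y)\,|y|^{-n-\sigma}\,dy=0$ on all of $\mathbb{R}^n$, since (2) says precisely that $\Delta_h u$ lies in the kernel and the relation is itself translation-invariant. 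Meanwhile, from the growth condition (1), I expect that an $m$-th order difference with increments of size comparable to $R$ inherits the bound $[u_{h_1,\dots,h_m}]_{C^\beta(B_R)}\lesssim |h_1|\cdots|h_m|\,R^{\sigma+\alpha-\beta-m}$ for $R\gtrsim \max|h_i|$ — roughly, each difference trades one power of $R$ for one power of $|h_i|$. Taking $m=\nu+1$ makes the growth exponent on $u_{h_1,\dots,h_{\nu+1}}$ negative (using $\sigma+\alpha<\nu+1$), so after rescaling this function is bounded and in fact decays.

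Next I would run the Liouville step on $w:=u_{h_1,\dots,h_{\nu+1}}$. It satisfies the linear equation $\int \delta w(x,y)\,|y|^{-n-\sigma}\,dy=0$, which is $L w=0$ for the (zero) fractional operator, and it has strictly sublinear growth — indeed bounded, even decaying. Here I would invoke interior regularity estimates for such translation-invariant nonlocal equations (the operator $w\mapsto \int \delta w(x,y)\,|y|^{-n-\sigma}\,dy$ is, up to the trace normalization, a constant multiple of the fractional Laplacian $-(-\Delta)^{\sigma/2}$ acting on $w$; more precisely $D^\sigma w$ has trace proportional to $(-\Delta)^{\sigma/2}w$, so (2) forces $(-\Delta)^{\sigma/2}w=0$). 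A bounded $\sigma$-harmonic function on $\mathbb{R}^n$ is constant by the classical Liouville theorem for the fractional Laplacian; combined with the decay $w(x)\to 0$ as measured by its seminorms, $w\equiv 0$. Thus $u_{h_1,\dots,h_{\nu+1}}\equiv 0$ for all choices of increments, which is exactly the statement that $u$ is a polynomial of degree $\le \nu$ (a standard characterization: vanishing of all $(\nu+1)$-st finite differences). Finally, the growth condition (1) with $\beta$ slightly larger than $\nu$ but $\le \sigma+\alpha'$, applied as $R\to\infty$, forces the degree to be exactly $\nu$ rather than larger — but since we already have degree $\le\nu$ from the difference argument, (1) is consistent and no further restriction is needed; the role of the non-integrality of $\sigma+\alpha'-\nu$ and $\sigma+\alpha-\nu$ is to guarantee that the relevant seminorms $[\cdot]_{C^\beta}$ at $\beta=\nu$ and slightly below are the right (Hölder) quantities and that $D^\sigma w$ is well-defined and finite for polynomials of degree $\le\nu$.

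The main obstacle I anticipate is making the reduction to the fractional Laplacian rigorous at the level of the $\sigma$-order Hessian $D^\sigma$ and justifying that the finite-difference hypothesis (2) genuinely transfers to iterated differences \emph{with the integrals still absolutely convergent}. One has to check that under the growth bound (1) the integral $\int|\delta w(x,y)|\,|y|^{-n-\sigma}\,dy$ converges for $w=u_{h_1,\dots,h_m}$: near $y=0$ this uses the $C^\beta$ bound with $\beta>\sigma$ (available since $\sigma+\alpha'>\sigma$), and near $y=\infty$ it uses that the $m$-fold difference has decayed enough — this is precisely why one needs $m$ large, and where the bookkeeping of exponents $\sigma+\alpha-\beta-m$ matters. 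A second, more technical point is proving the finite-difference characterization "all $(\nu+1)$-st differences vanish $\Rightarrow$ polynomial of degree $\nu$" in a way compatible with only knowing $u$ is, say, locally $C^{\sigma+\alpha'}$ (not a priori smooth); this is classical but should be stated carefully, perhaps by first showing $w:=u_{h_1,\dots,h_\nu}$ is affine in each variable and then concluding. Everything else — the compactness/rescaling to pass from sublinear growth to boundedness, and the fractional Liouville theorem itself — is standard and can be cited.
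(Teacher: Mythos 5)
Your overall strategy matches the spirit of the paper's: reduce to an auxiliary function with slow polynomial growth that is fractional-harmonic, apply a Liouville-type theorem, and integrate back up. The paper does this by taking exact directional derivatives of order $\nu$ (namely $u_e$ when $\nu=1$ and $D_{ee}u$ when $\nu=2$, justified by the growth hypothesis at $\beta=\nu$), then proves a bespoke Liouville statement by a rescaling argument using the $C^{1,\alpha}$ interior estimate from Caffarelli--Silvestre. You instead take $(\nu+1)$-fold finite differences and cite the fractional Liouville theorem. These are genuinely different implementations: the finite-difference version avoids needing $u$ to actually possess $\nu$ classical derivatives (though in fact the hypothesis guarantees this), and the cited Liouville theorem replaces the paper's explicit scaling argument. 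Neither route is clearly superior; the paper's has the advantage of controlling the conclusion exactly at degree $\nu$ in one step.

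However, your exponent bookkeeping is wrong at the crucial step $m=\nu+1$, and the claimed decay is not available. The bound $[u_{h_1,\dots,h_m}]_{C^\beta(B_R)} \lesssim |h_1|\cdots|h_m|R^{\sigma+\alpha-\beta-m}$ is only valid for $m\le\nu$: each finite difference trades a factor $|h_i|$ for a full derivative, and this is permitted precisely because $[u]_{C^\beta}$ is controlled for $\beta\le\sigma+\alpha'$, i.e. up to $\nu$ full derivatives. At $m=\nu+1$ the remaining regularity is only H\"older of order $\sigma+\alpha'-\nu\in(0,1)$, so the correct estimate is
\[
\bigl|\Delta_{h_1}\cdots\Delta_{h_{\nu+1}}u(x)\bigr|\ \le\ |h_1|\cdots|h_\nu|\,|h_{\nu+1}|^{\sigma+\alpha'-\nu}\,[u]_{C^{\sigma+\alpha'}(B_{R+c})}\ \lesssim\ |h_1|\cdots|h_\nu|\,|h_{\nu+1}|^{\sigma+\alpha'-\nu}\,R^{\alpha-\alpha'}.
\]
This \emph{grows} like $R^{\alpha-\alpha'}$ (slowly, since $\alpha-\alpha'$ can be made small, but positive), and there is no choice of $\beta$ or interpolation that yields a negative exponent, since $\sigma+\alpha'-1-\gamma<\sigma+\alpha-1-\gamma$ cannot be driven below zero using $\gamma\le\sigma+\alpha'-\nu$. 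Consequently the Liouville step does not give $w\equiv 0$; it gives that $w$ is a \emph{constant} (which may depend on the increments). Your statement "so after rescaling this function is bounded and in fact decays" is therefore incorrect, and the conclusion $w\equiv 0$ does not follow as written.

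The fix is short but must be stated. Once you know $\Delta_{h_1}\cdots\Delta_{h_{\nu+1}}u$ is constant in $x$ for every choice of increments, one more difference kills it, so all $(\nu+2)$-fold differences vanish and $u$ is a polynomial of degree at most $\nu+1$. Then the $\beta=0$ growth hypothesis $\|u\|_{\mathcal{L}^\infty(B_R)}\le R^{\sigma+\alpha}$, with $\sigma+\alpha<\nu+1$, forces the homogeneous degree-$(\nu+1)$ part to vanish, leaving a polynomial of degree at most $\nu$. (Alternatively, apply Liouville to the $\nu$-fold difference directly, as the paper effectively does with the $\nu$-th derivative: that function has growth $R^{\sigma+\alpha-\nu}$ with exponent in $(0,1)$, so the sublinear-growth Liouville theorem for the fractional Laplacian yields that each $\nu$-fold difference is affine, from which degree $\nu$ follows without invoking the growth hypothesis a second time.) With this correction your argument is sound and arrives at the same conclusion by a route that parallels, but does not reproduce, the paper's.
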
 

\begin{rem}
Throughout this paper, $[\cdot]_{C^\beta}$ is to be understood as $[\cdot]_{C^{\nu_\beta,\beta-\nu_\beta}}$, where $\nu_\beta$ is the integer part of $\beta$. 
\end{rem} 

\begin{rem}
We will need to study functions of the form $u=\tilde{u}-p$, where $\tilde{u}$ is the solution to some equation and $p$ is a polynomial. In particular, $u$ may not have the correct decay at infinity for the fractional order operator to be well-defined. 

If $p$ is affine, this does not pose any serious problem. Since the symmetric difference does not see affine perturbations, one has $\delta u(x,y)=\delta\tilde{u}(x,y)$ and thus $u$ and $\tilde{u}$ solve the same equation. 

However, if $p$ is a paraboloid, then we need to study $u(\cdot+h)-u(\cdot)$ instead. Here we take advantage of the identity $\delta(p(\cdot+h))(x,y)-\delta p(x,y)=0$.
\end{rem}

\begin{rem}
Similar results in \cite{Ser2} can only handle $\alpha<\bar{\alpha}$, where $\bar{\alpha}$ is some universal constant. We have a stronger result that holds for all $\alpha\in(0,1)$ since our equation for $u(\cdot+h)-u(\cdot)$ is the fractional Laplacian.
\end{rem}
\begin{proof}We leave the case $\nu=0$ to the reader.

\textbf{When $\nu=1$.} 

Since $\sigma+\alpha'>1$, we can take $\beta=1$ in the growth estimate to obtain $$\|u_e\|_{\mathcal{L}^{\infty}(B_R)}\le R^{\sigma+\alpha-1} \text{ for $R\ge 1$ and $e\in\mathbb{S}^{n-1}$}.$$

In particular $\|u_e\|_{\mathcal{L}^{\infty}(B_1)}\le 1$ and $$\int_{B_1^c} |u_e|/|y|^{n+\sigma}dy\le C(n)\int_1^\infty r^{\sigma+\alpha-1}r^{n-1}/r^{n+\sigma}dr\le C(n).$$

By the equation one has $$ \int \delta(\frac{u(\cdot+\epsilon e)-u}{|\epsilon|})(x,y)\frac{1}{|y|^{n+\sigma}}dy=0.$$ Taking $\epsilon\to 0$ one has $$\int \delta u_e(x,y)\frac{1}{|y|^{n+\sigma}}dy=0.$$

Consequently the $C^{1,\alpha}$-estimate \cite{CS1} gives $$\|\nabla u_e\|_{\mathcal{L}^{\infty}(B_{1/2})}\le C(n).$$

For $\rho\ge 1$ define $v(x)=\rho^{-(\sigma+\alpha-1)}u_e(\rho x)$. 

Then $v$ satisfies the same growth estimate as $u_e$: $$\|v\|_{\mathcal{L}^{\infty}(B_R)}=\rho^{-(\sigma+\alpha-1)}\|u_e\|_{\mathcal{L}^{\infty}(B_{\rho R})}\le \rho^{-(\sigma+\alpha-1)}(\rho R)^{\sigma+\alpha-1}=R^{\sigma+\alpha-1}. $$  

Obviously $v$ also solves the same equation. As a result the same $C^{1,\alpha}$-estimate gives $$\|\nabla v\|_{\mathcal{L}^{\infty}(B_{1/2})}\le C(n).$$

Now note that $$\|\nabla v\|_{\mathcal{L}^{\infty}(B_{1/2})}=\rho^{-(\sigma+\alpha-1)}\cdot \rho\|\nabla u_e\|_{\mathcal{L}^{\infty}(B_{1/2\rho})}.$$ One has 
$$\|\nabla u_e\|_{\mathcal{L}^{\infty}(B_{1/2\rho})}\le C(n)\rho^{\sigma+\alpha-2}.$$

Since $\nu=1$, $\sigma+\alpha-2<0$. Thus $\rho\to\infty$ shows that $u_e$ is a constant. This being true for all $e\in\mathbb{S}^{n-1}$, we see that $u$ is affine.

\textbf{When $\nu=2$.}

Now we take instead $\beta=2$ and obtain $$\|D^2u\|_{\mathcal{L}^{\infty}(B_R)}\le R^{\sigma+\alpha-2} \text{ if $R\ge 1$}.$$

Meanwhile the equation gives $$\int \delta(\frac{u(\cdot+\epsilon e)+u(\cdot-\epsilon e)-2u}{|\epsilon|^2})(x,y)\frac{1}{|y|^{n+\sigma}}dy=0.$$ 

When $\epsilon\to 0$ one has $$\int \delta D_{ee}u(x,y)\frac{1}{|y|^{n+\sigma}}dy=0.$$

From here one uses the same argument as before to obtain $D_{ee}u$ is constant for all $e\in\mathbb{S}^{n-1}$ and as a result $u$ is a paraboloid.
\end{proof} 

We need to use some previously known regularity result in \cite{CS1}. Hence it is important that our operator falls into their category of nonlocal elliptic operators:

\begin{prop}
The operator $u\mapsto F(D^\sigma u)$ is elliptic with respect to $\mathcal{L}_1$ as in Caffarelli-Silvestre \cite{CS1}.
\end{prop}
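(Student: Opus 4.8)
The plan is to verify the definition of ellipticity with respect to the class $\mathcal{L}_1$ of \cite{CS1} directly. Recall that, writing $M^+_{\mathcal{L}_1}w=\sup_{L\in\mathcal{L}_1}Lw$ and $M^-_{\mathcal{L}_1}w=\inf_{L\in\mathcal{L}_1}Lw$ for the extremal operators associated with $\mathcal{L}_1$, the operator $u\mapsto F(D^\sigma u)$ is elliptic with respect to $\mathcal{L}_1$ precisely when
$$M^-_{\mathcal{L}_1}(u-v)(x)\le F(D^\sigma u(x))-F(D^\sigma v(x))\le M^+_{\mathcal{L}_1}(u-v)(x)$$
for every point $x$ and every pair $u,v$ that is $C^{1,1}$ at $x$ and lies in $\mathcal{L}^1(\frac{1}{1+|y|^{n+\sigma}}dy)$, i.e. the class on which these operators are evaluated classically in \cite{CS1}. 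Hence it suffices to produce, for each such $x$, one linear operator $L_x\in\mathcal{L}_1$ realizing $F(D^\sigma u(x))-F(D^\sigma v(x))=L_x(u-v)(x)$, since the middle quantity is then automatically trapped between the infimum and the supremum.

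To build $L_x$, I would linearize $F$ along the segment from $D^\sigma v(x)$ to $D^\sigma u(x)$. By the fundamental theorem of calculus,
$$F(D^\sigma u(x))-F(D^\sigma v(x))=A(x):\big(D^\sigma u(x)-D^\sigma v(x)\big),\qquad A(x):=\int_0^1 DF\big(tD^\sigma u(x)+(1-t)D^\sigma v(x)\big)\,dt,$$
and uniform ellipticity of $F$ together with $F\in C^1$ gives $\lambda I\le DF(M)\le\Lambda I$ for every $M\in S^n$ (test the Pucci bounds against rank-one increments and let them go to zero), hence $\lambda I\le A(x)\le\Lambda I$. Since $D^\sigma$ is linear in its argument, $D^\sigma u(x)-D^\sigma v(x)=D^\sigma w(x)$ with $w:=u-v$, and unwinding the definition of $D^\sigma$,
$$A(x):D^\sigma w(x)=\int\delta w(x,y)\,\frac{\sum_{i,j}A_{ij}(x)\langle e_i,y\rangle\langle e_j,y\rangle}{|y|^{n+\sigma+2}}\,dy=\int\delta w(x,y)\,\frac{\langle A(x)y,y\rangle}{|y|^{n+\sigma+2}}\,dy=:L_xw(x).$$
The kernel is $K_x(y)=\langle A(x)y,y\rangle/|y|^{n+\sigma+2}=a_x(y/|y|)/|y|^{n+\sigma}$ with $a_x(\theta)=\langle A(x)\theta,\theta\rangle$; it is even, and $\lambda I\le A(x)\le\Lambda I$ yields $\lambda\le a_x\le\Lambda$, so $K_x$ obeys the two-sided bound demanded in \cite{CS1} up to the fixed $(2-\sigma)$ normalization constant. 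Differentiating the quadratic form $a_x$ on the sphere and the radial factor $|y|^{-(n+\sigma)}$ separately produces $|\nabla K_x(y)|\le C(n)\Lambda|y|^{-(n+\sigma+1)}$, which is the extra condition defining $\mathcal{L}_1$. Thus $L_x\in\mathcal{L}_1$, and the proposition follows.

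I do not expect a genuine obstacle here; the only computation is the gradient bound on $K_x$, which is elementary, and the substance is just the linearization identity plus linearity of $D^\sigma$. The sole point to watch is that the identity above presupposes $u,v$ smooth enough at $x$ for $D^\sigma u(x)$ and $D^\sigma v(x)$ to exist in the classical sense and for $\delta w(x,\cdot)\,|y|^{-n-\sigma}$ to be integrable — but this is exactly the class of functions against which ellipticity is tested in \cite{CS1}, so nothing is actually lost in restricting to it.
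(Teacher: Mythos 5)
Your proposal is correct and rests on the same core observation as the paper: the kernels $K(y)=\langle Ay,y\rangle/|y|^{n+\sigma+2}$ with $\lambda I\le A\le\Lambda I$ belong to $\mathcal{L}_1$, and the difference $F(D^\sigma u(x))-F(D^\sigma v(x))$ is controlled by a linear operator built from such a kernel. The only difference is cosmetic: the paper invokes the standard Pucci-type inequality $F(M)-F(N)\le\sup_{\lambda I\le A\le\Lambda I}A:(M-N)$ (which requires only uniform ellipticity, no differentiability) and then pushes the sup through the integral representation to land directly on $M^+_{\mathcal{L}_1}$, while you use the $C^1$ hypothesis and the fundamental theorem of calculus to produce a single matrix $A(x)$ realizing the difference exactly, then exhibit the corresponding $L_x\in\mathcal{L}_1$. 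Both are valid under the paper's standing assumptions; your version is a hair more explicit (it realizes the difference rather than only bounding it, and it sketches the gradient bound $|\nabla K_x|\le C(n)\Lambda|y|^{-(n+\sigma+1)}$ that the paper leaves to the reader), while the paper's version is slightly more economical and would survive even if $F$ were merely Lipschitz.
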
 

\begin{proof}
For smooth bounded functions $\phi$ and $\psi$, the ellipticity of $F$ leads to 
\begin{align*}F(D^{\sigma}\phi(x))-F(D^{\sigma}\psi(x))&\le\sup_{\lambda\le A\le\Lambda} \Sigma A_{ij}(D^{\sigma}_{ij}\phi(x)-D^{\sigma}_{ij}\psi(x))\\&=\sup_{\lambda\le A\le\Lambda} \Sigma A_{ij}(\int\delta\phi(x,y)\frac{\langle e_i,y\rangle\langle e_j,y\rangle}{|y|^{n+\sigma+2}}dy-\int\delta\psi(x,y)\frac{\langle e_i,y\rangle\langle e_j,y\rangle}{|y|^{n+\sigma+2}}dy)\\&=\sup_{\lambda\le A\le\Lambda}\int(\delta\phi(x,y)-\delta\psi(x,y))\frac{\Sigma A_{ij}\langle e_i,y\rangle\langle e_j,y\rangle}{|y|^{n+\sigma+2}}dy\\&=\sup_{\lambda\le A\le\Lambda}\int(\delta\phi(x,y)-\delta\psi(x,y))\frac{\langle Ay,y\rangle}{|y|^{n+\sigma+2}}dy.
\end{align*} Now note that the kernel $\frac{\langle Ay,y\rangle}{|y|^{n+\sigma+2}}$ is in the class $\mathcal{L}_1$ as in \cite{CS1}, we have $$F(D^{\sigma}\phi(x))-F(D^{\sigma}\psi(x))\le M^{+}_{\mathcal{L}_1}(\phi-\psi)(x).$$ The symmetric inequality follows from symmetric argument.
\end{proof}

\section{Improvement of regularity}

The following improvement of regularity result is a key stepping stone for the main result. It says that given $\alpha\in(0,1)$, any sub-optimal regularity can be improved to the optimal $C^{\sigma+\alpha}$-regularity if the solution is small enough. 

\begin{thm} Let $F$ be as in Theorem 1.2.

Given $\alpha\in (0,1)$ and $0<\alpha'<\alpha<1$ such that $\sigma+\alpha$ and $\sigma+\alpha'$ are not integers and have the same integer part $\nu$.  Then there exist constants $\kappa=\kappa(n, \lambda, \Lambda,\sigma, \alpha, \alpha',\omega)>0$ and $C=C(n, \lambda, \Lambda, \sigma, \alpha, \alpha',\omega)<\infty$ such that if $$F(D^\sigma u)=0 \text{ in $B_1$}$$ and $$[u]_{C^{\sigma+\alpha'}(B_1)}+\|u\|_{\mathcal{L}^{\infty}(B_1)}+\|u\|_{\mathcal{L}^1(\frac{1}{|y|^{n+\sigma}}dy)}\le\kappa,$$ then we have the following $$[u]_{C^{\sigma+\alpha}(B_{1/2})}\le C ([u]_{C^{\sigma+\alpha'}(B_1)}+\|u\|_{\mathcal{L}^{\infty}(B_1)}+\|u\|_{\mathcal{L}^1(\frac{1}{|y|^{n+\sigma}}dy)}).$$
\end{thm}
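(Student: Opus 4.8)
The plan is to argue by contradiction and compactness, in the style of Serra's blow-up argument, using the Liouville theorem (Theorem 2.5) as the rigidity input. Suppose the conclusion fails for some fixed $\alpha,\alpha'$: then there are operators $F_k$ satisfying the hypotheses with a common modulus $\omega$ and ellipticity constants, and solutions $u_k$ of $F_k(D^\sigma u_k)=0$ in $B_1$ with $\theta_k:=[u_k]_{C^{\sigma+\alpha'}(B_1)}+\|u_k\|_{\mathcal{L}^\infty(B_1)}+\|u_k\|_{\mathcal{L}^1(\frac{1}{|y|^{n+\sigma}}dy)}\to 0$, yet $[u_k]_{C^{\sigma+\alpha}(B_{1/2})}\ge k\,\theta_k$. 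The first step is to normalize, replacing $u_k$ by $u_k/\theta_k$, so that the normalized functions have unit total norm $\theta_k=1$ but $C^{\sigma+\alpha}$-seminorm blowing up, while the rescaled operators $E_k(M):=\frac{1}{\theta_k}F_k(\theta_k M)$ still satisfy the hypotheses of Proposition 2.3 and hence subconverge to a constant-coefficient linear operator $L(M)=\sum A_{ij}M_{ij}$; the normalized $u_k$ solve $E_k(D^\sigma u_k)=0$.

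The second step is the key blow-up. Because the $C^{\sigma+\alpha}$-seminorm is unbounded while lower-order norms are bounded, one selects a sequence of points $x_k\in B_{1/2}$ and scales $r_k\to 0$ at which the normalized $C^{\sigma+\alpha}$-seminorm is essentially realized, in the standard fashion (pick $r_k$ so that $M_k:=\sup_{r\ge r_k} r^{-(\sigma+\alpha)}$ times the oscillation-from-its-degree-$\nu$-Taylor-polynomial of $u_k$ on $B_r(x_k)$ is comparable to the global blowup quantity, and $x_k$ nearly attains it). Define the rescaled and polynomial-subtracted functions $v_k(x):=\frac{1}{M_k r_k^{\sigma+\alpha}}\big(u_k(x_k+r_k x)-p_k(x)\big)$, where $p_k$ is the degree-$\nu$ Taylor polynomial of $u_k$ at $x_k$. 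The normalization forces $v_k$ to be nondegenerate at scale $1$ (it is not within a small multiple of a degree-$\nu$ polynomial on $B_1$), while the maximality of $M_k$ and $r_k$ forces the growth bound $[v_k]_{C^\beta(B_R)}\le C R^{\sigma+\alpha-\beta}$ for $R\ge 1$ and $0\le\beta\le\sigma+\alpha'$, which is exactly hypothesis (2.1) of the Liouville theorem. One must also check that the $v_k$ are equicontinuous on compact sets (using the interior $C^{\sigma+\alpha'}$-type regularity of Remark/Theorem 2.5-compatible estimates from \cite{CS1}) so that a subsequence converges locally uniformly to some $v$; here the crucial point, which is the whole reason this works for nonlocal equations, is that the growth control at every intermediate scale gives compactness in all of $\mathbb{R}^n$, not merely in the (shrinking) domain of the equation. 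Since the equation is invariant under subtracting the affine part and differentiable perturbations of $p_k$ of paraboloid type only affect $u(\cdot+h)-u(\cdot)$ as in Remark 2.8, one gets that the limit $v$ satisfies, after differencing, $\int \delta(v(\cdot+h)-v)(x,y)\frac{1}{|y|^{n+\sigma}}dy=0$ — this uses the stability Proposition 2.4 together with the fact that the limiting operator is linear and constant-coefficient, so differentiating the equation in $h$ yields the fractional Laplacian of the increments and kills the paraboloid ambiguity.

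The third step is to conclude: by the Liouville Theorem 2.5, the limit $v$ is a polynomial of degree $\nu$. But $v$ was normalized precisely so that its distance (in the relevant seminorm at scale $1$) from degree-$\nu$ polynomials is bounded below by a positive constant, and this property passes to the limit by uniform convergence on $\bar B_2$ together with the growth bounds controlling the tails. This is the desired contradiction. The final $C^{\sigma+\alpha}(B_{1/2})$ estimate (with the stated constant $C$) then follows in the usual way from the contradiction argument: the argument shows there is $\kappa>0$ so that smallness $\le\kappa$ forces the seminorm bound with a fixed constant.

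The main obstacle I expect is the second step — setting up the blow-up correctly. Specifically: (i) defining the right scale-invariant quantity whose near-maximum is attained at $(x_k,r_k)$ so that one simultaneously gets nondegeneracy at scale $1$ and the full family of growth bounds (2.1) for all $\beta\le\sigma+\alpha'$, which requires carefully bookkeeping the Taylor polynomial subtraction and its coefficients' growth; (ii) verifying that $M_k\to\infty$ (so the rescaling is genuine) and that $r_k\to0$, so the rescaled domains $B_{1/r_k}$ exhaust $\mathbb{R}^n$; and (iii) showing the rescaled operators still produce, in the limit and after the $h$-differencing, exactly equation (2.2) — this needs the uniform $C^1$-control on $F_k$ via Proposition 2.3 and a careful passage to the limit in the viscosity sense via Proposition 2.4 applied to the difference quotients, handling the fact that $v_k$ itself may fail to have the decay needed for $D^\sigma v_k$ to be classically defined (Remark 2.8). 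Once the blow-up is correctly set up, the remainder is a routine contradiction.
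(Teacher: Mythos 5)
Your proposal follows the paper's strategy closely: argue by contradiction, normalize by the total norm, perform a Serra-type blow-up at a carefully chosen scale, pass to a global limit solving a constant-coefficient linear equation via the operator-compactness and stability propositions, and conclude by the Liouville theorem; you also correctly identify the $h$-differencing trick needed in the $\nu=2$ case where $D^\sigma$ of the polynomial-subtracted function is undefined. The one genuine divergence is your choice of blow-up monitoring quantity. You propose tracking $r^{-(\sigma+\alpha)}$ times the distance of $u_k$ from its degree-$\nu$ Taylor polynomial on $B_r(x_k)$; this works in principle (it is a standard Ros-Oton--Serra device), but, as you yourself flag, it forces you to track best-fit polynomial coefficients across scales. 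The paper instead sets
$$\theta_k(r'):=\sup_{r'<r<1/2}\ \sup_{z\in B_{1/2}}\ r^{\alpha'-\alpha}[u_k]_{C^{\sigma+\alpha'}(B_r(z))},$$
the local $C^{\sigma+\alpha'}$-\emph{seminorm} at scale $r$, weighted by $r^{\alpha'-\alpha}$. This choice is decisive in two ways: the elementary Lemma 3.2 shows $\sup_{r'}\theta_k(r')\ge\tfrac12[u_k]_{C^{\sigma+\alpha}(B_{1/2})}$, so the contradiction hypothesis immediately gives $\theta_k(r_k)\to\infty$; and because the seminorm does not see degree-$\nu$ polynomials, both the growth bound $[\tilde v_k]_{C^{\sigma+\alpha'}(B_R)}\le R^{\alpha-\alpha'}$ and the nondegeneracy $[\tilde v_k]_{C^{\sigma+\alpha'}(B_1)}\ge 1/2$ drop out of the definitions with no polynomial bookkeeping whatsoever. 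Adopting this quantity would remove the obstacle you name as the hardest. One further caution: the growth bounds for all lower $\beta\le\sigma+\alpha'$ (needed to invoke the Liouville theorem) do not follow formally from the $C^{\sigma+\alpha'}$-seminorm growth alone — the paper uses a cut-off/averaging argument to produce a point $\bar x\in B_1$ with $|D_e\tilde v_k(\bar x)|\le C(n)$ before interpolating down — so that step deserves explicit treatment rather than being absorbed into ``the maximality forces the growth bound.''
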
 

The following simple lemma plays an important role in these improvement of regularity arguments. Compare to the one in \cite{Ser2}, we have truncated the length scale here. This frees us from excessive assumption on the regularity of $u$ outside $B_1$.

\begin{lem}$0<\alpha'<\alpha<1$.
Suppose $u\in C^{\alpha'}(B_1)$ and $$\sup_{0<r<1/2}\sup_{z\in B_{1/2}}r^{\alpha'-\alpha}[u]_{C^{\alpha'}(B_r(z))}\le A,$$ then $$[u]_{C^{\alpha}(B_{1/2})}\le 2A.$$
\end{lem}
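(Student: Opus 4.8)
The plan is to prove this by a direct covering argument, exploiting the scaling relation between the $C^{\alpha'}$-seminorm at scale $r$ and the $C^{\alpha}$-seminorm at scale $1/2$. Fix two points $x,y\in B_{1/2}$ and set $r=|x-y|$. There are two regimes to consider.

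First I would handle the regime $r<1/2$, which is where the hypothesis is used. Observe that $x,y$ both lie in $B_r(x)$ (well, $y\in \overline{B_r(x)}$), but more comfortably we can pick a ball $B_\rho(z)\subset B_1$ with $z\in B_{1/2}$, $\rho$ slightly larger than $r$ but still $<1/2$, containing both $x$ and $y$; taking $z$ to be the midpoint of $x$ and $y$ and $\rho=r$ works since then $|x-z|=|y-z|=r/2<r$. By the definition of the $C^{\alpha'}$-seminorm on $B_r(z)$,
\begin{equation}
|u(x)-u(y)|\le [u]_{C^{\alpha'}(B_r(z))}|x-y|^{\alpha'}=[u]_{C^{\alpha'}(B_r(z))}r^{\alpha'}.
\end{equation}
Now multiply and divide by $r^{\alpha}$: using the hypothesis $r^{\alpha'-\alpha}[u]_{C^{\alpha'}(B_r(z))}\le A$, we get
\begin{equation}
|u(x)-u(y)|\le \big(r^{\alpha'-\alpha}[u]_{C^{\alpha'}(B_r(z))}\big)\,r^{\alpha}\le A\,r^{\alpha}=A\,|x-y|^{\alpha}.
\end{equation}
This already gives the bound with constant $A$ when $|x-y|<1/2$.

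The only remaining case is $r=|x-y|\ge 1/2$ (which forces $r$ to be close to $1/2$ but at most the diameter of $B_{1/2}$, i.e. $r<1$). Here I would argue by splitting the segment from $x$ to $y$, or more simply: since $x,y\in B_{1/2}$ we always have $|x-y|<1$, so $|x-y|^{\alpha'}\le |x-y|^{\alpha}\cdot|x-y|^{\alpha'-\alpha}\le |x-y|^{\alpha}\cdot(1/2)^{\alpha'-\alpha}=2^{\alpha-\alpha'}|x-y|^{\alpha}\le 2|x-y|^{\alpha}$. Applying the $\alpha'$-estimate at scale just below $1/2$ along a chain of at most two or three overlapping balls (or using that $[u]_{C^{\alpha'}(B_{1/2})}\le \lim_{r\uparrow 1/2} [u]_{C^{\alpha'}(B_r(z))}\le A(1/2)^{\alpha-\alpha'}\le 2A$ by the hypothesis), we conclude $|u(x)-u(y)|\le 2A|x-y|^{\alpha}$. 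Combining the two regimes, $[u]_{C^{\alpha}(B_{1/2})}\le 2A$.

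I do not anticipate a genuine obstacle here — this is a soft interpolation-type lemma. The only point requiring a little care is the bookkeeping near the threshold scale $r\approx 1/2$: one must make sure the chosen ball $B_r(z)$ (or the overlapping chain) stays inside $B_1$ so that the hypothesis applies, and that the constant does not blow up, which is why the statement settles for the clean constant $2$ rather than $1$. If one wanted constant exactly $1$ one would need $z\in B_{1/2}$ and $B_r(z)\subset B_{1/2}$, which fails for $r$ near $1/2$; the factor $2$ absorbs exactly this loss via $2^{\alpha-\alpha'}\le 2$.
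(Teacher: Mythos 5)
Your proof is correct and follows the same overall structure as the paper's: split into a near regime $|x-y|<1/2$, where the hypothesis applies directly, and a far regime $|x-y|\ge 1/2$, which needs a separate argument. In the near regime you and the paper do essentially the same thing (the paper takes $z$ to be one endpoint, you take $z$ to be the midpoint; both choices keep $z\in B_{1/2}$ and $B_r(z)\subset B_1$). In the far regime you diverge: the paper bootstraps from the near-regime estimate by inserting the midpoint $m=\tfrac12(x+y)$ and using the triangle inequality, which yields the constant $2^{1-\alpha}\le 2$; you instead invoke the hypothesis once more with $z=0$ and $r\uparrow 1/2$ to get $[u]_{C^{\alpha'}(B_{1/2})}\le A(1/2)^{\alpha-\alpha'}$, then interpolate using $|x-y|^{\alpha'}\le 2^{\alpha-\alpha'}|x-y|^{\alpha}$. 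Carried out cleanly, the factors $(1/2)^{\alpha-\alpha'}$ and $2^{\alpha-\alpha'}$ cancel, so your far-regime bound is in fact $A|x-y|^{\alpha}$ (your stated $\le 2A$ is a harmless overestimate). Two small points worth noting: the limit $\lim_{r\uparrow 1/2}[u]_{C^{\alpha'}(B_r(z))}$ recovers $[u]_{C^{\alpha'}(B_{1/2})}$ only for $z=0$, since for $z\ne 0$ the balls $B_r(z)$ do not exhaust $B_{1/2}$; and the ``chain of two or three overlapping balls'' you gesture at is not actually needed once you commit to the parenthetical argument, so it can be dropped. Both approaches are valid; the paper's midpoint trick is slightly more self-contained in that it reuses the already-established near-regime bound rather than re-invoking the hypothesis.
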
 

\begin{proof}
For $z\in B_{1/2}$ and $h\in B_{1/2}$, $$\frac{|u(z+h)-u(z)|}{|h|^{\alpha}}\le \frac{[u]_{C^{\alpha'}(B_{|h|}(z))}|h|^{\alpha'}}{|h|^{\alpha}}=|h|^{\alpha'-\alpha}[u]_{C^{\alpha'}(B_{|h|}(z))}\le A.$$  

Now for $z,z'\in B_{1/2}$ but $|z-z'|>1/2$, let $m=\frac{1}{2}(z+z')$. Then the previous estimate gives 
$|u(z)-u(m)|\le A|z-m|^{\alpha}$ and $|u(z')-u(m)|\le A|z'-m|^{\alpha}$. Thus 
$$|u(z)-u(z')|\le A(|z-m|^{\alpha}+|z'-m|^{\alpha})=A2^{1-\alpha}|z-z'|^{\alpha}.$$
\end{proof} 

We now begin the proof of Theorem 3.1. 

Suppose the result is false, then we can find a sequence of operators $F_k$ with ellipticity constant $\lambda$ and $\Lambda$, $F_k(0)=0$, and $DF_k$ enjoy the same modulus of continuity $\omega$.  

There is also a sequence of functions $u_k$ such that $$[u_k]_{C^{\sigma+\alpha'}(B_1)}+\|u_k\|_{\mathcal{L}^{\infty}(B_1)}+\|u_k\|_{\mathcal{L}^1(\frac{1}{|y|^{n+\sigma}}dy)}=\kappa_k\to 0,$$ $$F_k(D^\sigma u_k)=0 \text{ in $B_1$}$$ but $$[u_k]_{C^{\sigma+\alpha}(B_{1/2})}\ge k\kappa_k.$$

With an unforgivable abuse of notation, we would denote $\frac{1}{\kappa_k}u_k$ by the same function $u_k$. Then one has the following \begin{equation}
\frac{1}{\kappa_k}F_k(\kappa_k D^\sigma u_k)=0 \text{ in $B_1$},
\end{equation} 
\begin{equation}
[u_k]_{C^{\sigma+\alpha'}(B_1)}+\|u_k\|_{\mathcal{L}^{\infty}(B_1)}+\|u_k\|_{\mathcal{L}^1(\frac{1}{|y|^{n+\sigma}}dy)}=1,
\end{equation} and \begin{equation}
[u_k]_{C^{\sigma+\alpha}(B_{1/2})}\ge k.
\end{equation} 

Define, for each $k$, the following quantity $$\theta_k(r'):=\sup_{r'<r<1/2}\sup_{z\in B_{1/2}}r^{\alpha'-\alpha}[u_k]_{C^{\sigma+\alpha'}(B_r(z))}.$$ We'd like to point out that a similar quantity was studied in \cite{Ser2} to get the `correct rate' of blow-up.

Then Lemma 3.1 gives $$\lim_{r'\to 0}\theta_k(r')=\sup_{r'>0}\theta_k(r')\ge k/2.$$

By definition, one finds $r_k>1/k$ and $z_k\in B_{1/2}$ such that \begin{equation}
r_k^{\alpha'-\alpha}[u_k]_{C^{\sigma+\alpha'}(B_{r_k}(z_k))}>\frac{1}{2}\theta_k(\frac{1}{k})\ge \frac{1}{2}\theta_k(r_k)\to\infty.
\end{equation} 
Note that $[u_k]_{C^{\sigma+\alpha'}(B_{r_k}(z_k))}\le 1$, the above estimate forces $r_k\to 0$.

Define the blow-up sequence $$v_k(x):=\frac{1}{\theta_k(r_k)}\frac{1}{r_k^{\sigma+\alpha}}u_k(r_kx+z_k).$$

Now we divide the proof into two cases depending on the value of $\nu$.  The key difference is that for $\nu\le 1$, we would subtract from $v_k$ an affine function. Here the new function would solve the same equation since the symmetric difference does not see affine functions. For $\nu=2$ we would subtract instead a paraboloid. This case takes more effort since the function obtained will not solve the same equation. 

\begin{proof} \textbf{The Case $\nu\le 1.$}

Actually we only give the proof for $\nu=1$, leaving the case $\nu=0$ to the reader. 

For $1\le R\le \frac{1}{2r_k}$ one has
\begin{align*}
[v_k]_{C^{\sigma+\alpha'}(B_R)}&=\frac{1}{\theta_k(r_k)}\frac{1}{r^{\sigma+\alpha}_k}[u_k]_{C^{\sigma+\alpha'}(B_{r_kR}(z_k))}r_k^{\sigma+\alpha'}\\&=\frac{1}{\theta_k(r_k)}r_k^{\alpha'-\alpha}[u_k]_{C^{\sigma+\alpha'}(B_{r_kR}(z_k))}\\&=\frac{1}{\theta_k(r_k)}\frac{(r_kR)^{\alpha'-\alpha}}{R^{\alpha'-\alpha}}[u_k]_{C^{\sigma+\alpha'}(B_{r_kR}(z_k))}\\&\le\frac{1}{\theta_k(r_k)}\theta_k(r_kR)R^{\alpha-\alpha'}\\&\le R^{\alpha-\alpha'}.
\end{align*}
Here we used the monotonicity of $\theta$.

Define $\ell_k(x)=v(0)+\nabla v(0)\cdot x$ and $\tilde{v}_k=v_k-\ell_k.$ 

Since $[\ell_k]_{C^{\sigma+\alpha'}}=0$ the previous estimate passes to $\tilde{v}_k$:\begin{equation}
[\tilde{v}_k]_{C^{\sigma+\alpha'}(B_R)}\le R^{\alpha-\alpha'} \text{  for $1\le R\le\frac{1}{2r_k}$}.
\end{equation} 

In particular by picking $R=1$, one has $$[\nabla\tilde{v}_k]_{C^{\sigma+\alpha'-1}}(B_1)\le 1.$$

With $|\nabla\tilde{v}_k(0)|=0$, this implies $|\nabla\tilde{v}_k(x)|\le |x|^{\sigma+\alpha'-1}$ for $x\in B_1$, and as a result,

\begin{align*}|\tilde{v}_k(x)|&=|\tilde{v}_k(x)-\tilde{v}_k(0)|\\&\le \|\nabla\tilde{v}_k\|_{\mathcal{L}^{\infty}(B_{|x|})}|x|\\&\le |x|^{\sigma+\alpha'}
\end{align*}for all $x\in B_1$.

Note that this estimate is independent of $k$, and this is the reason why one needs to subtract the affine function $\ell_k$ from $v_k$.

Now let $\eta$ be some cut-off function that is $1$ in $B_{1/2}$ but vanishes outside $B_1$.  For each $e\in\mathbb{S}^{n-1}$,$$|\int_{B_{1}} D_e\tilde{v}_k\eta|=|\int_{B_{1}} \tilde{v}_kD_e\eta|\le C(n).$$ Thus one can find $\bar{x}\in B_1$ such that $|D_e\tilde{v}_k(\bar{x})|\le C(n)$.

By the growth estimate $[D_e\tilde{v}_k]_{C^{\sigma+\alpha'-1}(B_R)}\le R^{\alpha-\alpha'}$, one has for $x\in B_R$, $1\le R\le \frac{1}{2r_k}$ \begin{align*}
|D_e\tilde{v}_k(x)|&\le|D_e\tilde{v}_k(\bar{x})|+R^{\alpha-\alpha'}|x-\bar{x}|^{\sigma+\alpha'-1}\\&\le C(n)+R^{\alpha-\alpha'}(R+1)^{\sigma+\alpha'-1}\\&\le CR^{\sigma+\alpha-1}.
\end{align*}

Now an interpolation argument gives $$
[\tilde{v}_k]_{C^{\beta}(B_R)}\le CR^{\sigma+\alpha-\beta}$$for $\beta\le\sigma+\alpha'$ and $1\le R\le \frac{1}{2r_k}$.

To see this, note that for $1\le|x|\le R$, \begin{align*}|\tilde{v}_k(x)|&=|\tilde{v}_k(x)-\tilde{v}_k(0)|\\&\le \|\nabla\tilde{v}_k\|_{\mathcal{L}^{\infty}(B_{|x|})}|x|\\&\le C|x|^{\sigma+\alpha-1}|x|\\&\le C|x|^{\sigma+\alpha}\\&\le CR^{\sigma+\alpha}.\end{align*} This, together with the estimate in $B_1$, gives the growth estimate when $\beta=0$. 

For $\beta\in(0,1)$, the estimate follows from 
\begin{align*}|\tilde{v}_k(x')-\tilde{v}_k(x)|&\le |x'-x| \|\nabla\tilde{v}_k\|_{\mathcal{L}^{\infty}(B_R)}\\&\le |x'-x|^{\beta}|x'-x|^{1-\beta} \|\nabla\tilde{v}_k\|_{\mathcal{L}^{\infty}(B_R)}\\&\le |x'-x|^{\beta}R^{1-\beta} CR^{\sigma+\alpha-1}\\&=CR^{\sigma+\alpha-\beta}|x'-x|^{\beta}.\end{align*}

We have already established the case $\beta=1$, and the case $\beta\in (1,\sigma+\alpha']$ follows from similar argument.

This growth estimate implies that,  up to a subsequence, $\tilde{v}_k$ converges locally uniformly in $C^{\beta}$ to some $\tilde v$ for any $\beta<\sigma+\alpha'$. The previous growth estimate passes to $\tilde{v}$ for any $R\ge 1$:\begin{equation}
[\tilde{v}]_{C^{\beta}(B_R)}\le CR^{\sigma+\alpha-\beta}.
\end{equation} 

Meanwhile, \begin{align*}[\tilde{v}_k]_{C^{\sigma+\alpha'}(B_1)}&=\frac{1}{\theta_k(r_k)}\frac{1}{r_k^{\sigma+\alpha}}r_k^{\sigma+\alpha'}[u_k]_{C^{\sigma+\alpha'}(B_{r_k}(z_k))}\\&=\frac{1}{\theta_k(r_k)}r_k^{\alpha'-\alpha}[u_k]_{C^{\sigma+\alpha'}(B_{r_k}(z_k))}\\&\ge 1/2.
\end{align*} This also passes to $\tilde{v}$: \begin{equation}
[\tilde{v}]_{C^{\sigma+\alpha'}(B_1)}\ge 1/2.
\end{equation} 

Now note that $$D^\sigma v_k(x)=\frac{1}{\theta_k(r_k)}\frac{1}{r_k^{\alpha}}D^\sigma u_k(r_kx+z_k),$$ thus $v_k$ solves $$\frac{1}{\theta_k(r_k)r_k^{\alpha}\kappa_k}F_k(\theta_k(r_k)r_k^{\alpha}\kappa_kD^\sigma v_k(x))=0 \text{ in $B_{\frac{1}{2r_k}}$}.$$

Since \begin{align*}
\theta_k(r_k)r_k^{\alpha}&=r_k^\alpha\sup_{r_k<r<1/2}\sup_{z\in B_{1/2}}r^{\alpha'-\alpha}[u_k]_{C^{\sigma+\alpha'}(B_{r}(z))}\\&\le r_k^{\alpha}r_k^{\alpha'-\alpha}[u_k]_{C^{\sigma+\alpha'}(B_{1})}\\&\le 1,
\end{align*} we have $$\theta_k(r_k)r_k^{\alpha}\kappa_k\to 0.$$

Thus Proposition 2.2 and 2.3 apply and give an $A\in S^n$ such that $\Sigma A_{ij}D^{\sigma}_{ij}\tilde{v}=0$ in $\mathbb{R}^n$. Up to an affine change of variables,

$$\int \delta\tilde{v}(x,y)\frac{1}{|y|^{n+\sigma}}dy=0 \text{ in $\mathbb{R}^n$}.$$

Combining this with (3.6), we apply Theorem 2.4 to conclude $\tilde{v}$ is affine, contradicting (3.7).
 \end{proof} 
 
 Now we proceed to the case when $\nu=2$.

\begin{proof}\textbf{The Case $\nu=2$.}

With similar arguments one establishes for $1\le R\le \frac{1}{2r_k}$ 
\begin{equation*}
[v_k]_{C^{\sigma+\alpha'}(B_R)}\le R^{\alpha-\alpha'}.
\end{equation*}
Now define $p_k(x)=v_k(0)+\nabla v_k(0)+\frac{1}{2}\langle D^2v(0)x,x\rangle$ and $\tilde{v}_k=v_k-p_k$. 

Again one has the same growth estimate on $\tilde{v}_k$:
 $$
[\tilde{v}_k]_{C^{\beta}(B_R)}\le CR^{\sigma+\alpha-\beta}$$for $\beta\le\sigma+\alpha'$ and $1\le R\le \frac{1}{2r_k}$. 

And up to a subsequence $\tilde{v}_k$ converges locally uniformly to $\tilde{v}$ in $C^{\beta}$ for any $\beta<\sigma+\alpha'$. The limit satisfies \begin{equation}
[\tilde{v}]_{C^{\beta}(B_R)}\le CR^{\sigma+\alpha-\beta}
\end{equation} for $0\le\beta\le\sigma+\alpha'$ and $R\ge 1$.

As before one has \begin{equation}
[\tilde{v}]_{C^{\sigma+\alpha'}(B_1)}\ge 1/2.
\end{equation} 

Now comes the major difference from the previous case, namely, we do not have $\delta\tilde{v}_k(x,y)=\delta v_k(x,y)$ anymore. 

However, note that for a paraboloid $p(x)=p(0)+\langle b,x\rangle+\frac{1}{2}\langle Ax,x\rangle$, one has \begin{align*}\delta p(x,y)&=(p(0)+\langle b,x+y\rangle+\frac{1}{2}\langle A(x+y),x+y\rangle)+(p(0)+\langle b,x-y\rangle+\frac{1}{2}\langle A(x-y),x-y\rangle)\\&-2(p(0)+\langle b,x\rangle+\frac{1}{2}\langle Ax,x\rangle)\\&=\frac{1}{2}\langle A(x+y),x+y\rangle+\frac{1}{2}\langle A(x-y),x-y\rangle-\langle Ax,x\rangle\\&=\frac{1}{2}\langle Ax,x\rangle+\frac{1}{2}\langle Ay,y\rangle+\langle Ax,y\rangle +\frac{1}{2}\langle Ax,x\rangle+\frac{1}{2}\langle Ay,y\rangle-\langle Ax,y\rangle-\langle Ax,x\rangle\\&=\langle Ay,y\rangle.
\end{align*}

Therefore, if we define $$p^h(x):=p(x+h)=p(0)+\langle b,h\rangle+\frac{1}{2}\langle Ah,h\rangle+\langle b,x\rangle+\langle Ah,x\rangle+\frac{1}{2}\langle Ax,x\rangle,$$ then $$\delta p^h(x,y)=\langle Ay,y\rangle=\delta p(x,y).$$

Apply this to our functions, we have \begin{align*}\delta\tilde{v}^h_k(x,y)-\delta\tilde{v}_k(x,y)&=\delta v^h_k(x,y)-\delta v_k(x,y)-(\delta p^h_k(x,y)-\delta p_k(x,y))\\&=\delta v^h_k(x,y)-\delta v_k(x,y).\end{align*}In particular, although $\tilde{v}_k$ and $\tilde{v}^h_k$ grow too fast at infinity for the $\sigma$-order Hessian to be defined, due to the above cancelation, $D^\sigma (\tilde{v}^h_k-\tilde{v}_k)(x)$ is well-defined and $$D^\sigma (\tilde{v}^h_k-\tilde{v}_k)(x)=D^\sigma v^h_k(x)-D^\sigma v_k(x).$$

We now study the equation satisfied by $\tilde{v}^h_k-\tilde{v}_k$.

Since $$\frac{1}{\theta_k(r_k)r_k^{\alpha}\kappa_k}F_k(\theta_k(r_k)r_k^{\alpha}\kappa_kD^\sigma v^h_k(x))=0,$$ we have $$\frac{1}{\theta_k(r_k)r_k^{\alpha}\kappa_k}F_k(\theta_k(r_k)r_k^{\alpha}\kappa_k(D^\sigma (\tilde{v}^h_k-\tilde{v}_k)(x)+D^\sigma v_k(x)))=0.$$

Let $\epsilon_k:=\theta_k(r_k)r_k^{\alpha}\kappa_k$, then as in the previous case, $$\epsilon_k=O(\kappa_k),$$ and \begin{equation}\frac{1}{\epsilon_k}F_k(\epsilon_k(D^\sigma (\tilde{v}^h_k-\tilde{v}_k)(x)+D^\sigma v_k(x)))=0.\end{equation}

We claim \begin{lem}
Up to an affine change of variables, $$\int \delta (\tilde{v}^h-\tilde{v})(x,y)\frac{1}{|y|^{n+\sigma}}dy=0.$$
\end{lem}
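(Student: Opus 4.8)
The plan is to pass to the limit in equation (3.11) after suitably normalizing the increments. Fix $h\in\mathbb{R}^n$ and write $w_k:=\tilde v_k^h-\tilde v_k$. From the growth estimate on $\tilde v_k$ (at the level $\beta\le\sigma+\alpha'$, with $\sigma+\alpha'>1$) together with $|h|$ fixed, one gets that $w_k$ is controlled: on $B_R$ the difference $\tilde v_k(\cdot+h)-\tilde v_k(\cdot)$ has $C^{\beta-1}$-type bounds of order $R^{\sigma+\alpha-\beta}\cdot|h|$ for $R$ up to $\tfrac{1}{2r_k}-|h|$, so up to a subsequence $w_k\to w:=\tilde v^h-\tilde v$ locally uniformly, and $D^\sigma w_k(x)\to D^\sigma w(x)$ for each fixed $x$ (splitting the defining integral into $|y|\le 1$ and $|y|>1$ and using the uniform $C^{\sigma+\alpha'}$-bound near $x$ plus the $\mathcal L^1(|y|^{-n-\sigma}dy)$-type growth control at infinity, exactly as in the proof of Proposition 2.3). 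The key structural point, already recorded above, is that the paraboloid subtraction is invisible to the second-difference of the \emph{increment}, so $D^\sigma w_k = D^\sigma v_k^h - D^\sigma v_k$ is genuinely well-defined despite $\tilde v_k$ growing too fast.

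Next I would linearize (3.11). Subtract the equation for $v_k$ from the equation for $v_k^h$: since $F_k$ is $C^1$,
\begin{align*}
0&=\tfrac{1}{\epsilon_k}\bigl(F_k(\epsilon_k D^\sigma v_k^h(x))-F_k(\epsilon_k D^\sigma v_k(x))\bigr)\\
&=\int_0^1 DF_k\bigl(\epsilon_k D^\sigma v_k(x)+t\,\epsilon_k D^\sigma w_k(x)\bigr)\,dt\cdot D^\sigma w_k(x).
\end{align*}
Because $\epsilon_k=O(\kappa_k)\to0$ and, on any fixed ball, $D^\sigma v_k(x)$ and $D^\sigma w_k(x)$ stay bounded (from the growth estimates), the argument of $DF_k$ tends to $0$; using the modulus of continuity $\omega$ of $DF_k$ and passing to a further subsequence so that $DF_k(0)\to A\in S^n$ with $\lambda\le A\le\Lambda$, the integral converges to $A$ uniformly on compact sets. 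Hence in the limit $\sum_{i,j}A_{ij}\,D^\sigma_{ij}w(x)=0$ for every $x\in\mathbb{R}^n$, i.e. $\int \delta w(x,y)\,\langle Ay,y\rangle\,|y|^{-n-\sigma-2}dy=0$. Diagonalizing $A$ and rescaling coordinates along its eigendirections (the affine change of variables), the kernel $\langle Ay,y\rangle|y|^{-n-\sigma-2}$ becomes (a constant times) $|y|^{-n-\sigma}$, which gives exactly $\int\delta w(x,y)\,|y|^{-n-\sigma}dy=0$; note $w=\tilde v^h-\tilde v$ transforms correctly under this linear change since the identity $\delta p^h=\delta p$ used to define $w_k$ is coordinate-free. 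This is the claimed conclusion.

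The step I expect to be the main obstacle is justifying the convergence $D^\sigma w_k(x)\to D^\sigma w(x)$ and the boundedness of $D^\sigma w_k$ on compact sets, uniformly enough to pass to the limit inside the Taylor integral. The subtlety is that $\tilde v_k$ itself does not decay, so one cannot bound $D^\sigma\tilde v_k$ directly; one must always work with the increment $w_k=\tilde v_k(\cdot+h)-\tilde v_k(\cdot)$ (equivalently $v_k(\cdot+h)-v_k(\cdot)$ up to the harmless paraboloid term) and exploit that first differences of a function with the growth $[\tilde v_k]_{C^\beta(B_R)}\le CR^{\sigma+\alpha-\beta}$ have one extra order of decay, which is just barely enough to make $\int_{|y|>1}|\delta w_k(x,y)||y|^{-n-\sigma}dy$ converge and be controlled uniformly in $k$ for $|y|\lesssim \tfrac{1}{r_k}$; the tail $|y|\gtrsim \tfrac{1}{r_k}$ must be handled separately using the original $\mathcal L^1$-normalization (3.4) pulled back through the blow-up. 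For $|y|\le1$ one uses the uniform $C^{\sigma+\alpha'}$ bound with $\sigma+\alpha'>\sigma$, which makes that piece of the integral absolutely convergent with a $k$-independent bound. Once these estimates are in place, dominated convergence finishes the argument, and the rest (linearization, extraction of $A$, diagonalization) is routine.
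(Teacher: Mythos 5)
Your proposal follows the same overall strategy as the paper: linearize via the fundamental theorem of calculus, pass to the limit using the stability proposition (Proposition 2.3), identify the limiting linear operator as $A=\lim DF_k(0)$, and diagonalize $A$ to reduce to the fractional Laplacian. However, there is a genuine gap at the crucial quantitative step, where you claim that ``on any fixed ball, $D^\sigma v_k(x)$ and $D^\sigma w_k(x)$ stay bounded (from the growth estimates).'' This is false. From the scaling of the blow-up one has
\[
|D^\sigma v_k(x)| \;=\; \frac{1}{\theta_k(r_k)\,r_k^{\alpha}}\,\bigl|D^\sigma u_k(r_k x + z_k)\bigr| \;\le\; \frac{C}{\theta_k(r_k)\,r_k^{\alpha}},
\]
and since $\theta_k(r_k)\,r_k^{\alpha}\le r_k^{\alpha'}\to 0$, this bound diverges; the same is true for $D^\sigma v_k^h$ and hence for $D^\sigma w_k$. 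So the reasoning ``$\epsilon_k\to 0$ times a bounded quantity'' does not work. What is actually needed, and what the paper's proof exploits, is the exact identity $\epsilon_k=\theta_k(r_k)\,r_k^{\alpha}\,\kappa_k$, which produces a cancellation of the dangerous factor:
\[
\epsilon_k\,|D^\sigma v_k(x)| \;=\; \kappa_k\,\bigl|D^\sigma u_k(r_k x + z_k)\bigr| \;\le\; C\kappa_k\to 0,
\]
and similarly for $D^\sigma v_k^h$ and $D^\sigma w_k$. Without noticing this cancellation, the uniform convergence of the coefficient $\int_0^1 DF_k(\cdot)\,dt\to A$ is not justified. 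You flag this step as ``the main obstacle,'' but the discussion that follows addresses the well-definedness and convergence of $D^\sigma w_k$ (tail decay, dominated convergence), not the actual issue, which is that the individual quantities $D^\sigma v_k$ are unbounded and only the products $\epsilon_k D^\sigma v_k$ are small. The remainder of your argument --- linearization by FTC, modulus of continuity of $DF_k$, passage to the limit via stability, and the affine diagonalization of $A$ --- is correct and matches the paper.
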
 
\begin{proof}
Again let $A\in S^n$ be the limit of $DF_k(0)$.

(3.10) implies 
\begin{align*}0=&\frac{1}{\epsilon_k}F_k(\epsilon_k(D^\sigma (\tilde{v}^h_k-\tilde{v}_k)(x)+D^\sigma v_k(x)))-\frac{1}{\epsilon_k}F_k(\epsilon_k D^\sigma v_k(x))\\=&\frac{1}{\epsilon_k}\int_0^1\frac{d}{dt}F_k(t\epsilon_k(D^\sigma (\tilde{v}^h_k-\tilde{v}_k)(x)+D^\sigma v_k(x))+(1-t)\epsilon_k D^\sigma v_k(x))dt\\=&\frac{1}{\epsilon_k}\int_0^1DF_k(t\epsilon_k(D^\sigma (\tilde{v}^h_k-\tilde{v}_k)(x)+D^\sigma v_k(x))+(1-t)\epsilon_k D^\sigma v_k(x))dt\cdot\epsilon_kD^\sigma (\tilde{v}^h_k-\tilde{v}_k)(x)\\=&\int_0^1DF_k(\epsilon_k(tD^\sigma (\tilde{v}^h_k-\tilde{v}_k)(x)+D^\sigma v_k(x)))dt\cdot D^\sigma (\tilde{v}^h_k-\tilde{v}_k)(x).
\end{align*}

Since $\tilde{v}^h_k-\tilde{v}_k\to \tilde{v}^h-\tilde{v}$ locally uniformly, by Proposition 2.3, it suffices to show that $$\int_0^1DF_k(\epsilon_k(tD^\sigma (\tilde{v}^h_k-\tilde{v}_k)(x)+D^\sigma v_k(x)))dt\to A \text{ uniformly.}$$ 
To see this, note that \begin{align*}|D^\sigma v_k(x)|=&\frac{1}{\theta_k(r_k)r_k^{\sigma+\alpha}}r_k^{\sigma}|D^\sigma u_k(r_kx+z_k)|\\=&\frac{1}{\theta_k(r_k)r_k^{\alpha}}|D^\sigma u_k(r_kx+z_k)|\\\le&\frac{1}{\theta_k(r_k)r_k^{\alpha}}.\end{align*} We used $$[u_k]_{C^{\sigma+\alpha'}(B_1)}+\|u_k\|_{\mathcal{L}^{\infty}(\mathbb{R}^n)}=1$$ for the last inequality.

Similar estimate holds for $D^\sigma v^h_k(x)$ and hence for $D^\sigma (\tilde{v}^h_k-\tilde{v}_k)(x)$.

Consequently, 

\begin{align*}
|\int_0^1DF_k(\epsilon_k(tD^\sigma (\tilde{v}^h_k-\tilde{v}_k)(x)+D^\sigma v_k(x)))dt-A|\le&|DF_k(0)-A|\\+|\int_0^1DF_k(\epsilon_k(tD^\sigma (\tilde{v}^h_k-\tilde{v}_k)(x)+&D^\sigma v_k(x)))-DF_k(0)dt|\\\le& |DF_k(0)-A|+\omega(\kappa_k).
\end{align*}

This completes the proof for the lemma.
\end{proof} 

This lemma combined with (3.8) and Theorem 2.4 shows that $\tilde{v}$ is a paraboloid, contradicting (3.9). 

This completes our proof for Theorem 3.1.

\end{proof}

\section{Proof of Theorem 1.2}
We now combine previously known regularity estimates and the improvement of regularity to complete the proof of the main result. 

The starting point is:

\begin{prop}
There is some universal $\bar{\alpha}>0$ and $\bar{C}<\infty$ such that viscosity solution to $$F(D^\sigma u)=0 \text{ in $B_1$}$$ satisfies $$\|u\|_{C^{1,\bar{\alpha}}(B_{1/2})}\le \bar{C}(\|u\|_{\mathcal{L}^\infty(B_1)}+\|u\|_{\mathcal{L}^1(\frac{1}{1+|y|^{n+\sigma}}dy)}).$$
\end{prop}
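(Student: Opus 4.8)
The plan is to establish this $C^{1,\bar\alpha}$ estimate as a direct consequence of the general interior regularity theory of Caffarelli--Silvestre \cite{CS1}, using Proposition 2.5 to verify that our operator fits their framework. First I would recall that Proposition 2.5 shows $u\mapsto F(D^\sigma u)$ is elliptic with respect to the class $\mathcal{L}_1$; in particular, any viscosity solution $u$ of $F(D^\sigma u)=0$ in $B_1$ satisfies the two-sided inequality $M^-_{\mathcal{L}_1}u\le 0\le M^+_{\mathcal{L}_1}u$ in $B_1$ in the viscosity sense, where $M^\pm_{\mathcal{L}_1}$ are the Pucci-type extremal operators for $\mathcal{L}_1$. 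This reduces the problem to a statement purely about functions that are simultaneously sub- and supersolutions for the extremal operators, to which the regularity results of \cite{CS1} apply verbatim.

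The key steps, in order, are: (i) invoke Proposition 2.5 to pass from $F(D^\sigma u)=0$ to the extremal inequalities for $M^\pm_{\mathcal{L}_1}$; (ii) apply the interior $C^{1,\bar\alpha}$ estimate of \cite{CS1} for the class $\mathcal{L}_1$, which is valid precisely when $\sigma>1$ and which yields a universal $\bar\alpha>0$ and $\bar C<\infty$ with $\|u\|_{C^{1,\bar\alpha}(B_{1/2})}\le \bar C\,\|u\|_{\mathcal{L}^\infty(\mathbb{R}^n)}$, where the right-hand side is the full tail-weighted norm; (iii) rewrite $\|u\|_{\mathcal{L}^\infty(\mathbb{R}^n)}$ appearing there — which in the Caffarelli--Silvestre setting means $\sup_{B_1}|u|$ plus the tail contribution $\int |u(y)|(1+|y|)^{-n-\sigma}\,dy$ — as the sum $\|u\|_{\mathcal{L}^\infty(B_1)}+\|u\|_{\mathcal{L}^1(\frac{1}{1+|y|^{n+\sigma}}dy)}$, which is exactly the quantity in the statement; (iv) note that when $\sigma\le 1$ the same theory still gives a $C^{\bar\sigma}$ estimate with $\bar\sigma>\sigma$, and one should state the result with the understanding that the relevant sub-optimal Hölder exponent (here written $C^{1,\bar\alpha}$) is whatever \cite{CS1} provides; for the purposes of Section 3 one only needs \emph{some} $\sigma+\alpha'$ regularity with $\alpha'\in(0,1)$, so $\bar\alpha$ can be shrunk freely.

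The main obstacle is a bookkeeping one rather than a conceptual one: making sure the norm on the right-hand side of the Caffarelli--Silvestre estimate — which is stated in terms of $\|u\|_{L^\infty(\mathbb{R}^n)}$ for \emph{bounded} solutions — is correctly replaced by the tail-weighted norm that handles solutions with mild polynomial-free growth, and checking that the scaling is consistent (the estimate is stated on $B_1$ with constants depending only on $n,\lambda,\Lambda,\sigma$, hence universal in our sense). A secondary point to be careful about is the borderline behavior near $\sigma=1$: the $C^{1,\bar\alpha}$ formulation presupposes $\sigma+\bar\alpha>1$, which forces $\sigma>1-\bar\alpha$; for smaller $\sigma$ one simply records the best available $C^{\bar\beta}$ estimate with $\bar\beta<\sigma$ replaced by $\bar\beta\in(\sigma,1)$ when $\sigma<1$, and in all cases the output exponent strictly exceeds some $\sigma+\alpha'$ that can be fed into Theorem 3.1. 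Once these normalizations are pinned down, the proof is a one-line citation of \cite{CS1} combined with Proposition 2.5.
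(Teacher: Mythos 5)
Your proposal matches the paper's proof, which is the same one-line citation: Proposition 2.8 (the $\mathcal{L}_1$-ellipticity of $u\mapsto F(D^\sigma u)$; you cited it as ``Proposition 2.5'', but that is only a numbering slip) combined with the interior $C^{1,\bar\alpha}$ estimate of Caffarelli--Silvestre for the class $\mathcal{L}_1$, together with the routine truncation that converts $\|u\|_{\mathcal{L}^\infty(\mathbb{R}^n)}$ into $\|u\|_{\mathcal{L}^\infty(B_1)}+\|u\|_{\mathcal{L}^1(\frac{1}{1+|y|^{n+\sigma}}dy)}$.

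One substantive correction to your steps (ii) and (iv): the $C^{1,\alpha}$ interior estimate in \cite{CS1} for operators elliptic with respect to $\mathcal{L}_1$ does not require $\sigma>1$. It holds for every $\sigma\in(0,2)$ bounded away from $0$, because the proof there iterates the $C^\alpha$ estimate on incremental quotients $\frac{u(\cdot+h)-u}{|h|^\alpha}$, gaining a fixed $\alpha>0$ at each step until the cumulative exponent exceeds $1$; none of this requires $\sigma>1$. (For the model case of fractional harmonic functions this is also clear from the Poisson representation, which makes them $C^\infty$ in the interior for all $\sigma$.) So the case split you introduce for $\sigma\le 1$ is not needed, and the proposition holds as written with a genuine $C^{1,\bar\alpha}$ norm on the left for all $\sigma\in(0,2)$.
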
 

\begin{proof}
This is a direct consequence of Proposition 2.8 and estimates in \cite{CS1}.
\end{proof} 

This leads to the following, which is the main result for small exponents:

\begin{prop}
Let $F$ be as in Theorem 1.2.  

For any $\alpha\in (0,1)$ with $\sigma+\alpha<2$ and not an integer, there are constants $\kappa'>0$ and $C<\infty$ such that if $u$ solves $$F(D^\sigma u)=0 \textit{ in $B_1$}$$ and $\|u\|_{\mathcal{L}^\infty(B_1)}+\|u\|_{\mathcal{L}^1(\frac{1}{1+|y|^{n+\sigma}}dy)}\le\kappa'$, then $$[u]_{C^{\sigma+\alpha}(B_{1/2})}\le C(\|u\|_{\mathcal{L}^\infty(B_1)}+\|u\|_{\mathcal{L}^1(\frac{1}{1+|y|^{n+\sigma}}dy)}).$$
\end{prop}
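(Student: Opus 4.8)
The plan is to bootstrap from the low-regularity estimate in Proposition 4.1 up to the desired $C^{\sigma+\alpha}$ estimate by iterating the improvement-of-regularity result, Theorem 3.1. The subtle point is that Theorem 3.1 requires \emph{both} a smallness hypothesis on the relevant norm and, as part of that norm, an \emph{a priori} $C^{\sigma+\alpha'}$ bound with $\alpha'$ already close to $\alpha$; so one cannot apply it in a single step starting from $C^{1,\bar\alpha}$ when $\bar\alpha$ is small. Instead I would choose a finite increasing chain of exponents $0<\alpha_0<\alpha_1<\cdots<\alpha_m=\alpha$ with $\alpha_0$ small enough that $\sigma+\alpha_0$ sits just above the regularity provided by Proposition 4.1 (i.e. $\alpha_0<\bar\alpha$ if $\sigma<1$, and otherwise choose $\alpha_0$ so that $\sigma+\alpha_0<1+\bar\alpha$), and with all consecutive pairs $(\alpha_{i},\alpha_{i+1})$ having $\sigma+\alpha_i,\sigma+\alpha_{i+1}$ non-integer with the same integer part $\nu$; this is possible since $\sigma+\alpha$ is not an integer, so we may take the whole chain inside one of the intervals $(1,2),(2,3)$ avoiding the integer, rescaling down from $B_{1/2}$ to $B_1$ at each stage.

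The steps, in order, are as follows. First, apply Proposition 4.1 to get $\|u\|_{C^{1,\bar\alpha}(B_{1/2})}\le \bar C\,\kappa'$; after a dyadic rescaling $u\mapsto u(\cdot/2+\text{center})$ this gives, on $B_1$, the bound $[u]_{C^{\sigma+\alpha_0}(B_1)}+\|u\|_{\mathcal L^\infty(B_1)}+\|u\|_{\mathcal L^1(|y|^{-n-\sigma}dy)}\le C\kappa'$, where I absorb the passage from the weighted-$\mathcal L^1$ norm with $1+|y|^{n+\sigma}$ to the one with $|y|^{n+\sigma}$ using the $\mathcal L^\infty$ control on $B_1$ (the kernels differ only on the bounded set $B_1$). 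Note that since $u$ solves the translation-invariant equation $F(D^\sigma u)=0$ in $B_1$ and $F$ does not change under the affine rescaling, each rescaled function solves an equation of the same form. Second, for each $i$ from $0$ to $m-1$, invoke Theorem 3.1 with the pair $(\alpha',\alpha)=(\alpha_i,\alpha_{i+1})$: provided $\kappa'$ was chosen smaller than $\kappa(n,\lambda,\Lambda,\sigma,\alpha_{i+1},\alpha_i,\omega)/C^{i}$ times the accumulated constants, the hypothesis is met and we obtain $[u]_{C^{\sigma+\alpha_{i+1}}(B_{1/2})}\le C_i(\,\cdots\,)$; rescale back to $B_1$ and feed into the next step. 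Since $m$ depends only on $\sigma$ and $\alpha$ (and on the fixed gaps, which we may take universal), this is a finite iteration and the final constant $C$ and threshold $\kappa'$ depend only on the stated parameters. Third, collect the output of the last step, $[u]_{C^{\sigma+\alpha}(B_{1/2})}\le C(\|u\|_{\mathcal L^\infty(B_1)}+\|u\|_{\mathcal L^1((1+|y|^{n+\sigma})^{-1}dy)})$, after undoing the finitely many rescalings and reabsorbing the weighted norms as before; this is exactly the claim.

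I expect the main obstacle to be bookkeeping rather than a genuine new idea: one must verify that the smallness requirement propagates correctly through the iteration, i.e. that choosing $\kappa'$ small enough at the outset guarantees that the norm entering each application of Theorem 3.1 stays below that step's threshold $\kappa$, despite the constants $C_i$ that multiply it at each stage. Because there are only finitely many steps, a single small enough $\kappa'$ works; the point to be careful about is that $\kappa'$ must be chosen \emph{after} fixing the chain $\{\alpha_i\}$ and the constants $C_i$, so that it depends only on $n,\lambda,\Lambda,\omega,\alpha,\sigma$. A secondary technical point is the switch between the weight $|y|^{-n-\sigma}$ used in Section 3 and the weight $(1+|y|^{n+\sigma})^{-1}$ used here and in Section 4; this is harmless since on $B_1$ the two are comparable up to the $\mathcal L^\infty(B_1)$ term, but it should be stated explicitly. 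There is also the mild point that Proposition 4.1 gives $C^{1,\bar\alpha}$, so when $\sigma\le 1$ the integer part $\nu$ of $\sigma+\alpha_0$ could be $0$ or $1$ and when $\sigma>1$ it is $1$; in each case one must start the chain at an $\alpha_0$ for which $C^{1,\bar\alpha}(B_{1/2})\hookrightarrow C^{\sigma+\alpha_0}$ on the rescaled ball, which only requires $\sigma+\alpha_0\le 1+\bar\alpha$, and this is arranged by the choice of $\alpha_0$.
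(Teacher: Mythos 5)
Your argument is sound as a whole, but it is built around a misreading of the hypothesis of Theorem 3.1, and that misreading makes your proof considerably longer than necessary.

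You assert that Theorem 3.1 needs an a priori $C^{\sigma+\alpha'}$ bound ``with $\alpha'$ already close to $\alpha$.'' That is not what the theorem says. The only conditions on the pair $(\alpha',\alpha)$ are $0<\alpha'<\alpha<1$, both $\sigma+\alpha'$ and $\sigma+\alpha$ non-integer, and the two having the same integer part $\nu$. There is no closeness requirement whatsoever. Consequently, the paper's proof is a single application of Theorem 3.1: in the only nontrivial case $1<\sigma+\alpha<2$, one fixes any $\alpha'<\alpha$ with $1\le\sigma+\alpha'\le 1+\bar\alpha$ (so Proposition 4.1 controls $[u]_{C^{\sigma+\alpha'}(B_{3/4})}\le\bar C\kappa'$, and the integer parts of $\sigma+\alpha'$ and $\sigma+\alpha$ agree), shrinks $\kappa'$ once so that the combined norm is below the threshold $\kappa$ of Theorem 3.1, and concludes. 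No chain of exponents is needed, and correspondingly no iterated loss of constants or repeated rescaling has to be tracked.

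Your iterative scheme, with a finite chain $\alpha_0<\alpha_1<\cdots<\alpha_m=\alpha$ all sharing the same integer part, is logically correct and would yield the estimate --- the finitely many multiplicative constants and dyadic rescalings do close up, as you observe. But it buys nothing: every step of the chain is already subsumed by the last one, since Theorem 3.1 jumps directly from $\alpha_0$ to $\alpha$. The iteration also tempts one into the unnecessary worry about the interval $(2,3)$; here $\sigma+\alpha<2$ by hypothesis, so the chain always sits in $(0,1)$ or $(1,2)$. Two smaller points in your favor: your explicit handling of the discrepancy between the weights $|y|^{-n-\sigma}$ (used in Theorem 3.1) and $(1+|y|^{n+\sigma})^{-1}$ (used here), which are comparable up to an $\mathcal L^\infty(B_1)$ term, is a detail the paper passes over silently and is worth making explicit; and your observation that one must rescale from $B_{1/2}$ to a ball where the equation still holds before reapplying the interior estimate is the same device the paper realizes by working on the intermediate ball $B_{3/4}$.
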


\begin{proof}
The case $\sigma+\alpha\le 1+\bar{\alpha}$ is covered by the previous estimate, hence we only deal with the case where $1<\sigma+\alpha<2.$ 

In this case we fix $\alpha'<\alpha$ such that $1\le\sigma+\alpha'\le 1+\bar{\alpha}$. Then obviously $\sigma+\alpha'$ and $\sigma+\alpha$ have the same integer part, $1$.

By Proposition 4.1, $[u]_{C^{\sigma+\alpha'}(B_{3/4})}\le \bar{C}\kappa'$. Thus by taking $\kappa'$ small enough depending only on $\kappa$ as in Theorem 3.1 and $\bar{C}$ as in Proposition 4.1, we have 

$$[u]_{C^{\sigma+\alpha'}(B_{3/4})}+\|u\|_{\mathcal{L}^\infty(B_1)}+\|u\|_{\mathcal{L}^1(\frac{1}{1+|y|^{n+\sigma}}dy)}\le\kappa$$ and hence Theorem 3.1 applies to $u$ and gives the desired estimate.
\end{proof} 

\begin{rem}
Comparing with Proposition 4.1, the key point is that instead of some small universal $\bar{\alpha}$, we now essentially have $C^{1,\alpha}$-estimate for all $\alpha\in(0,1)$ for small solutions. In particular it says all small solutions are classical solutions since they are in $C^{\sigma^+}$.
\end{rem} 

We use the previous result to complete the proof of the main result:

\begin{proof}
We are left to deal with the case when $\sigma+\alpha>2$. 

Note that one has $2>\sigma>1$ in this case. 

By the previous result, we find $\kappa''>0$ and $\tilde{\alpha}>0$ such that $1<\sigma+\tilde{\alpha}<2$ and $u\in C^{\sigma+\tilde{\alpha}}$ whenever $\|u\|_{\mathcal{L}^\infty(B_1)}+\|u\|_{\mathcal{L}^1(\frac{1}{1+|y|^{n+\sigma}}dy)}\le\kappa''.$

Consequently $u_e$ is well-defined and satisfies $$\Sigma F_{ij}(D^\sigma u(x))D^{\sigma}_{ij}u_e(x)=0 \text{ in $B_1$}.$$ In a more familiar form, this is $$\int \delta u_e(x,y)\frac{\langle F_{ij}(D^\sigma u(x))y,y\rangle}{|y|^{n+\sigma+2}}dy=0\text{  in $B_1$}.$$

Now note that $F_{ij}(D^{\sigma}u(\cdot))$ is in $C^{\tilde{\alpha}}$ \cite{Yu2}, we can use cut-off argument in nonlocal Schauder theory \cite{JX} \cite{Ser2} to have estimate on $[u]_{C^{1+\sigma+\tilde{\alpha}}(B_{1/4})}=[u_e]_{C^{\sigma+\tilde{\alpha}}(B_{1/4})}.$ Note that $1+\sigma+\tilde{\alpha}$ and $\sigma+\alpha$ now have the same integer part 2,  we can again use the improvement of regularity result to obtain the desired result.

\end{proof} 

\section*{Acknowledgement}
The author is grateful to his PhD advisor Luis Caffarelli, for his constant encouragement and guidance, and Dennis Kriventsov for many insightful discussions and suggestions.


\end{document}